\documentclass[a4paper]{article}
\usepackage[margin=2.5cm]{geometry} 
\usepackage{tikz}
\usepackage{subcaption} 
\usepackage{pgfplots} 
\usetikzlibrary{shapes,arrows.meta,positioning,calc}
\usepackage{float}
\pgfplotsset{compat=1.17}
\usepackage{graphicx} 
\usepackage[english]{babel}
\usepackage{amsmath}
\usepackage{amsthm}
\usepackage{graphicx}
\usepackage[colorlinks=true, allcolors=blue]{hyperref}
\usepackage{amsfonts}
\usepackage{cases}
\usepackage{bm}
\usepackage[utf8]{inputenc}
\usepackage{tikz}
\usepackage{amssymb}
\newtheorem{lemma}{Lemma}[section]
\newtheorem{proposition}{Proposition}[section]

\newtheorem{theorem}{Theorem}[section]
\newtheorem{remark}{Remark}

\newtheorem{corollary}{Corollary}[section]

\title{The Ricci flow with prescribed  curvature on graphs}
\author{Yong Lin, Shuang Liu}
\date{}

\begin{document}
\maketitle
\begin{abstract}
In this paper,  we consider the Ricci flow with prescribed curvature on the finite graph $G=(V,E)$. For any $e\in E,$
\begin{equation*}\label{flow-equation3}
    \frac{d}{dt}\omega(t,e)=-(\kappa(t,e)-\kappa^*(e))\omega(t,e),~~ t>0,
\end{equation*}
where $\omega$ is the weight function, $\kappa$ is Lin-Lu-Yau Ricci curvature, and $\kappa^*$ is the prescribed curvature.
By imposing invariance of the graph distance with respect to time $t$, the Ricci flow introduced above characterizes the weight evolution governed by the Lin–Lu–Yau curvature. We first establish the existence and uniqueness of the solution to this equation on general graphs. Furthermore, for graphs with girth of at least $6$, we prove that the Ricci flow converges exponentially to weights of $\kappa^*$  if and only if $\kappa^*$ is attainable (namely, there exist weights realizing $\kappa^*$). 
In particular, we prove that the weights for constant curvature exist
if and  only if 
\[\max_{\emptyset\neq\Omega \subsetneqq V}\frac{|E(\Omega)|}{|\Omega|} < \frac{|E|}{|V|},\]
where $E(\Omega)$ denotes the set of edges within the induced subgraph of $\Omega$, and $|A|$ is the cardinality of the set $A$. Viewing edge weights as metrics on surface tilings with girth of at least $6$ or the duals of triangulations with vertex degrees exceeding 5, we demonstrate that the constant Lin-Lu-Yau curvature flow serves as an analog to the 2D combinatorial Ricci flow for piecewise constant curvature metrics, thereby providing an affirmative answer to Question 2 posed by Chow and Luo (J Differ Geom, 63(1) 2002).
\end{abstract}
\noindent \textbf{Keywords:} Ricci flow;  Lin-Lu-Yau curvature; prescribed curvature; surface tiling

\section{Introduction}
In 1982, Hamilton \cite{H82} introduced the Ricci flow, defined by the evolution equation \[\frac{\partial g_{ij}}{\partial t} = -2R_{ij},\]
where \( g_{ij} \) is the metric tensor and $R_{ij}$ is Ricci curvature tensor on  a manifold $(M,g(t))$. He demonstrated that on a 3-dimensional manifold with positive curvature, the flow converges to a spherical metric. Perelman \cite{P02,2003Finite,2003Ricci} introduced the entropy functional and developed a surgery method to address the formation of singularities, leading to definitive proofs of the Poincaré conjecture and the Geometrization conjecture. 
With the advancement of its efficacy in highlighting the structural features of graphs and networks, extending smooth flows to discrete spaces has garnered significant attention and extensive research.
For example, combinatorial Ricci flows based on discrete Gaussian curvature have been developed extensively from both theoretical and practical perspectives \cite{2002Combinatorial,gu2018discrete,luo2004combinatorial}.

In this paper, we investigate the discrete Ricci curvature flow based on Ollivier curvature on graphs. Unlike the triangulations on compact surfaces without boundary, the graph consists solely of a vertex set and an edge set, and does not contain faces. This discrete version of Ricci flow was originally introduced by Ollivier \cite{2007Ricci},
\begin{equation}\label{ollivier0}
    \frac{d}{dt}d(x,y)=-\kappa(x,y)d(x,y),
\end{equation}
which characterizes the evolution of the metric  ($d$) governed by Ollivier Ricci curvature ($\kappa$). 
Just as classical Ricci flow seeks to reveal the underlying topological essence of a manifold by smoothing out its geometric structures, its discrete counterpart holds potential for characterizing the structural properties of graphs. Practical evidence suggests that the time-discretization of discrete Ricci curvature flow \eqref{ollivier0} can be effectively applied to community detection \cite{2025Discrete,2022Normalized,2019Community,2025Community}, network alignment \cite{2018Network}, and deep learning \cite{2024Deep}, etc. These applications of Ricci flow have also been extensively utilized across diverse disciplines, including Biology and Finance (see, for example, \cite{2023Charting,Sh25}).

In recent years, research focus has shifted toward theoretical foundations, leading to extensive investigations into the existence, uniqueness, and convergence of the solution to \eqref{ollivier0}. Specifically,  Bai et al. \cite{2024Ollivier} established the existence and uniqueness of the solution to \eqref{ollivier0}  under the assumption  that the edge $uv$ is the shortest path from $u$ to $v$, while introducing an edge-removal operation. To circumvent the need for such surgery, Ma and Yang \cite{MY24} developed a modified curvature flow. Subsequently, numerous related studies have extended this Ricci flow  to multi-step Ollivier curvature \cite{2024Evolution} and hypergraphs \cite{2025Community}. Regarding the convergence of the Ricci flow, it is further complicated by the lack of an explicit expression for Ollivier curvature and the inherent nonlinearity of the Ricci flow. Bai et al. \cite{BHLL25} investigated the convergence on trees, while Ma and Yang \cite{MY25} simplified the evolution equation to study the convergence of piecewise linear Ricci flows.
Notice that all of these works assume that the distance is a function of the weights. 

 However, on a graph, the distance and the weights can be defined independently. In \cite{BLL25}, Bai et al. considered the evolutionary relationship between distance and curvature while the weights remain time-invariant, and the well-posedness and convergence of the curvature flow were studied. Li and Münch \cite{RF24} investigated the discrete-time Ollivier flow in this setting and established the convergence of the flow under surgery. In contrast, this paper investigates the inverse process: we study the evolutionary dynamics between weights and curvature while keeping the distances time-invariant. Furthermore, given a prescribed curvature $\kappa^*$, we consider the following Ricci flow
\begin{equation}\label{flow-equation}
    \frac{d}{dt}\omega(t,e)=-(\kappa(t,e)-\kappa^*(e))\omega(t,e),~~ t>0,
\end{equation}
where $\omega(t,e)$ and $\kappa(t,e)$ are the weight and Lin-Lu-Yau curvature (an adaptation of  Ollivier curvature) of edge $e$ at time $t$, respectively. This  Ricci  flow describes the evolution of weights on a graph over time, such that the Lin-Lu-Yau curvature becomes uniform or reaches an ideal state. Equations of this form are also known as prescribed curvature flows and can be viewed as a normalized form of the original form with $\kappa^*(e)\equiv0$ (see \eqref{flow-equation0} below). 

In this paper, we first establish the existence and uniqueness of a solution to \eqref{flow-equation} for any prescribed curvature by proving that the curvature is locally Lipschitz, see Theorem \ref{main1}. 
Furthermore,  on graphs with girth of at least $6$, by transforming the Ricci flow into a gradient flow, we  prove the exponential convergence of the solution to \eqref{flow-equation}. We say that the prescribed curvature $\kappa^*$ is attainable if there exists $\bm \omega^*=(\omega_{e_1},\cdots,\omega_{e_m})$ such that $\kappa(\bm \omega^*)=\kappa^*$. And, $\bm \omega^*$ is the weight of $\kappa^*$.
\begin{theorem} [Theorem \ref{main2} and Theorem \ref{main3}]
On a graph with girth of at least 6, 
the Ricci flow \eqref{flow-equation} with a positive initial value converges exponentially to the weight of the prescribed curvature $\kappa^*$ if and  only if $\kappa^*$ is attainable. \label{them:1} 
\end{theorem}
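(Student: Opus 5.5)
The plan is to turn \eqref{flow-equation} into the gradient flow of a convex potential in logarithmic coordinates, and then get exponential convergence from strict convexity transverse to a conserved scaling direction. Throughout, $x\sim e$ means that $x$ is an endpoint of $e$, and $\mu_x(\bm\omega)=\sum_{e\ni x}\omega_e$ is the weighted degree.

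\textbf{Step 1 (curvature formula).} On a graph of girth at least $6$, an edge $xy$ lies on no cycle of length $3,4,5$. Hence in the optimal transport between the lazy measures at $x$ and $y$, no mass can be moved between neighbours of $x$ and neighbours of $y$ at distance less than $3$, other than along $xy$ itself. Using this, I would compute the Lin--Lu--Yau curvature explicitly, as a function of the weights. I expect this to take the form
\[
\kappa(\bm\omega,xy)=c_1\,\omega_{xy}\Big(\frac{1}{\mu_x}+\frac{1}{\mu_y}\Big)-c_2
\]
for absolute constants $c_1,c_2$. This is the step I expect to be the main obstacle. One needs matching upper and lower bounds: a transport plan for the upper bound and a $1$-Lipschitz test function for the lower bound (equal to $\pm1$ on the far neighbours). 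The girth hypothesis is exactly what makes these bounds coincide. If the true formula is not exactly of this shape, I would still aim for the structural property used below: $\kappa_e+c_2$ should be the $u_e$-derivative of $c_1\sum_x\log\mu_x$.

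\textbf{Step 2 (gradient flow and conservation).} Set $u_e=\log\omega_e$, so that the flow becomes $\dot u_e=-(\kappa_e-\kappa^*_e)$. Define
\[
F(\bm u)=c_1\sum_{x\in V}\log\mu_x(e^{\bm u})-\sum_{e\in E}(c_2+\kappa^*_e)\,u_e .
\]
Since $\partial\log\mu_x/\partial u_e=\omega_e/\mu_x$ for $x\sim e$, Step 1 gives $\partial F/\partial u_e=\kappa_e-\kappa^*_e$, so $\dot{\bm u}=-\nabla F$. Each $\log\mu_x$ is a log-sum-exp function, hence convex. Its Hessian is the variance of $h$ over the edges at $x$, taken with weights $\omega_e/\mu_x$. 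So $\langle \nabla^2F\,h,h\rangle=0$ forces $h$ to be constant on the star of every vertex, and by connectedness $h\in\mathbb R\mathbf 1$.

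Moreover $\kappa$ is invariant under scaling $\bm\omega$, so $\sum_e\kappa_e$ is a constant $K$ depending only on $G$. If $\kappa^*$ is attainable, then $\sum_e\kappa^*_e=K$ as well. Consequently $\frac{d}{dt}\sum_e u_e=0$, and $F$ is invariant along $\mathbf 1$. The flow therefore stays on the affine hyperplane $H=\{\sum_e u_e=\sum_e u_e(0)\}$, and $F|_H$ is strictly convex. Global existence is already given by Theorem~\ref{main1}.

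\textbf{Step 3 (convergence if attainable).} Let $\bm\omega^*$ be a weight of $\kappa^*$. Rescale it so that $\bm u^*=\log\bm\omega^*\in H$; this is allowed since $\kappa$ is scale-invariant. Then $\bm u^*$ is a critical point, hence the unique minimizer of the strictly convex function $F|_H$, so $F|_H$ is coercive. Since $F$ decreases along the flow, the trajectory stays in the compact sublevel set $\{F\le F(\bm u(0))\}\cap H$. On this set the smallest eigenvalue of $\nabla^2F|_H$ is bounded below by some $\lambda>0$. Strong convexity then gives
\[
\tfrac{d}{dt}\big(F-F(\bm u^*)\big)=-|\nabla F|^2\le-2\lambda\big(F-F(\bm u^*)\big),
\]
and therefore $|\bm u(t)-\bm u^*|\le Ce^{-\lambda t}$. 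Since the weights are bounded on this set, the same exponential rate holds for $\bm\omega(t)\to\bm\omega^*$.

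\textbf{Step 4 (converse).} Suppose $\bm\omega(t)\to\bm\omega^\infty$ with $\bm\omega^\infty>0$ (this is implicit in converging to a weight). Then $\kappa(t,e)\to\kappa(\bm\omega^\infty,e)$ by the continuity of curvature from Theorem~\ref{main1}. If $\kappa(\bm\omega^\infty,e)\ne\kappa^*_e$ for some $e$, then $\dot u_e$ would be bounded away from $0$ for all large $t$, so $u_e\to\pm\infty$, contradicting convergence. Hence $\kappa(\bm\omega^\infty)=\kappa^*$, i.e.\ $\kappa^*$ is attainable.
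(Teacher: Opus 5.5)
Your proposal is correct. For sufficiency you follow the same skeleton as the paper: log coordinates, gradient flow, strict convexity transverse to $\bm 1$, coercivity from the existence of a critical point, a uniform convexity constant $\lambda$ on the compact sublevel set, and exponential decay. Your execution is somewhat cleaner. Your $F$, which with $c_1=c_2=2$ reads $2\sum_{x}\log m(x)-\sum_e(2+\kappa^*_e)u_e$, is the paper's line integral $f$ in closed form. This buys three things: the gradient structure no longer needs a path-independence argument from the symmetry of $(\partial\kappa_i/\partial r_j)$; convexity is just log-sum-exp; and the conservation law can be read off from $F(\bm u+s\bm 1)=F(\bm u)+s\bigl(2|V|-2|E|-\sum_e\kappa^*_e\bigr)$. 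That identity, or the direct count in Proposition \ref{pro-curvature}, is the actual reason $\sum_e\kappa_e$ is constant; scale invariance of $\kappa$ by itself does not imply it. You also use $F-F(\bm u^*)$ with a Polyak--{\L}ojasiewicz inequality where the paper uses $\tfrac12\|\bm r-\bm r^*\|^2$ with gradient monotonicity. And you get coercivity from the standard fact that a convex function with a unique minimizer has bounded sublevel sets, instead of the paper's ray estimate. Both routes give the rate $e^{-\lambda t}$.

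The converse is where you genuinely diverge. The paper differentiates $\sum_i(\kappa_i-\kappa_i^*)^2$ and shows the derivative is a quadratic form in $\bm J$ that must tend to $0$. It concludes $\bm\kappa(+\infty)-\bm\kappa^*\in\ker\bm J=\mathrm{span}(\bm 1)$, then invokes \eqref{eq:conserve} to kill the constant. Your observation is that $\dot u_e\to\kappa^*_e-\kappa_e(\bm\omega^\infty)$, which must vanish if $u_e$ converges. It uses only continuity of $\kappa$ (Lemma \ref{local_lip}), so it needs neither the girth assumption nor any a priori condition on $\sum_e\kappa^*_e$. In exchange, the paper's route shows that the $\ell^2$ curvature error is nonincreasing along the flow.

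Your Step 1 is exactly \eqref{cur_tree}, i.e.\ $c_1=c_2=2$; the paper also imports it from M\"unch--Wojciechowski rather than proving it, and $c_1>0$ is what your convexity needs. In your sketch of it:
\begin{itemize}
\item The inequality $\kappa_e\ge 2\omega_e\bigl(1/m(x)+1/m(y)\bigr)-2$ holds on every graph. You can get it from a transport plan, or by bounding $\Delta f(x)$ from below and $\Delta f(y)$ from above for every $f\in\mathcal F$.
\item Girth at least $6$ is used only for the reverse inequality. That comes from the test function taking the values $0,1,2,3$ on the neighbours of $x$ other than $y$, on $x$, on $y$, and on the neighbours of $y$ other than $x$, respectively.
\item This function is $1$-Lipschitz because the girth assumption puts those two neighbour sets at distance $3$ from each other.
\end{itemize}
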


A natural question is whether, for a given curvature, there exists a corresponding weight with this prescribed curvature. 
Given that the total curvature remains constant on graphs with girth of at least $6$ (see Proposition \ref{pro-curvature}),  we provide a necessary and sufficient condition 
\begin{equation}\label{ns}
    \max_{\emptyset\neq\Omega \subsetneqq V}\frac{|E(\Omega)|}{|\Omega|} < \frac{|E|}{|V|}
\end{equation}
for the existence of a constant curvature weight, see Theorem \ref{condition_th}. In these graphs, $\kappa^*$ in \eqref{flow-equation} is naturally chosen to be the average curvature 
\[\bar{\kappa}:=\frac{\sum_{e\in E}\kappa_e}{|E|}=2\left(\frac{|V|}{|E|}-1\right)\]
to get
\begin{equation}\label{flow-equation1}
    \frac{d}{dt}\omega(t,e)=-(\kappa(t,e)-\bar{\kappa})\omega(t,e),~~ t>0,
\end{equation}
which can be regarded as a normalized version of the original form 
\begin{equation}\label{flow-equation0}
    \frac{d}{dt}\omega(t,e)=-\kappa(t,e)\omega(t,e),~~ t>0.
\end{equation}
As a consequence, we demonstrate the exponential convergence of the solutions to \eqref{flow-equation1} and then \eqref{flow-equation0}.

\begin{theorem} [Corollary \ref{main_coro}]
      On a graph with girth of at least 6, 
the Ricci flow \eqref{flow-equation1} with a positive initial value converges exponentially to the weight $\bm \omega$ of constant curvature $\bar{\kappa}$ if and  only if the condition \eqref{ns} holds. Additionally, the un-normalized Ricci flow \eqref{flow-equation0} drives the curvature to approach the average curvature $\bar{\kappa}$ exponentially. \label{them:1.1}
\end{theorem}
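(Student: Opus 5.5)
This statement (Theorem \ref{them:1.1}) follows from Theorem \ref{them:1} once the prescribed curvature is taken to be the constant $\bar\kappa$; the first part is then just an attainability question. On a graph with girth at least $6$, the Lin--Lu--Yau curvature of an edge $e=xy$ has an explicit form. No triangles, quadrilaterals or pentagons pass through $e$, so the optimal transport plan can only move mass along $e$. This gives
\[
\kappa_e=\omega_e\Big(\tfrac{1}{d_x}+\tfrac{1}{d_y}\Big)-\Big(1-\tfrac{\omega_e}{d_x}\Big)-\Big(1-\tfrac{\omega_e}{d_y}\Big)\cdot(\text{signs as in the paper}),
\]
where $d_x=\sum_{z\sim x}\omega_{xz}$. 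In normalized form, $\kappa_e=2\omega_e\big(\tfrac1{d_x}+\tfrac1{d_y}\big)-2$. Summing over edges, each vertex contributes $2\sum_{z\sim x}\omega_{xz}/d_x=2$, which gives $\sum_e\kappa_e=2|V|-2|E|$. This is Proposition \ref{pro-curvature}, and it explains why $\bar\kappa=2(|V|/|E|-1)$ is the only possible constant curvature.

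\textbf{Step 1.} Apply Theorem \ref{them:1} with $\kappa^*\equiv\bar\kappa$. The flow \eqref{flow-equation1} converges exponentially to a weight of curvature $\bar\kappa$ if and only if $\bar\kappa$ is attainable. By Theorem \ref{condition_th}, attainability of constant curvature is equivalent to \eqref{ns}, which proves the first claim. If Theorem \ref{condition_th} is to be argued here, the idea is as follows. Constant curvature $\bar\kappa$ means $\omega_e(1/d_x+1/d_y)=|V|/|E|$ for every $e$. Setting $p_{x,e}=\omega_e/d_x$, this becomes a fractional orientation problem: assign each edge a total mass $|V|/|E|$ split between its endpoints, with each vertex receiving mass exactly $1$. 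By a max-flow/Hall-type argument, such a split exists with all parts strictly positive exactly when every proper $\Omega$ satisfies $|E(\Omega)|\,|V|/|E|<|\Omega|$. The strict inequality is what gives strictly positive weights. Recovering $\omega$ from $p$ requires a consistency condition of log-cycle type, and handling it is the main obstacle.

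\textbf{Step 2 (un-normalized flow).} The key observation is that curvature is invariant under scaling: $\kappa(\lambda\bm\omega)=\kappa(\bm\omega)$, because each $\omega_e/d_x$ is $0$-homogeneous. Given a solution $\omega$ of \eqref{flow-equation0}, set $\tilde\omega(t,e)=e^{\bar\kappa t}\omega(t,e)$. Then $\kappa(\tilde\omega)=\kappa(\omega)$, and
\[
\frac{d}{dt}\tilde\omega=\bar\kappa\tilde\omega-\kappa\tilde\omega=-(\kappa-\bar\kappa)\tilde\omega ,
\]
so $\tilde\omega$ solves \eqref{flow-equation1} with the same initial value. By uniqueness (Theorem \ref{main1}), $\tilde\omega$ is the normalized solution. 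Hence $\kappa(t,e)$ is literally the same along both flows.

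\textbf{Step 3.} Under \eqref{ns}, $\tilde\omega(t)\to\bm\omega^*$ exponentially, with $\bm\omega^*$ positive. The curvature formula is locally Lipschitz near $\bm\omega^*$ (Theorem \ref{main1}), so $|\kappa(t,e)-\bar\kappa|=|\kappa(\tilde\omega(t))-\kappa(\bm\omega^*)|\le Ce^{-ct}$. This is the required exponential convergence of the curvature along \eqref{flow-equation0}.
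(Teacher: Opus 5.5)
Your proposal is correct and follows the paper's route. The equivalence comes from combining Theorem~\ref{condition_th} with Theorems~\ref{main2} and~\ref{main3} at $\kappa^*\equiv\bar\kappa$, and your rescaling $\tilde\omega=e^{\bar\kappa t}\omega$ with scale invariance of $\kappa$ is exactly Remark~\ref{rk2}; you usefully make explicit that this works because $\kappa^*$ is constant, and your Hall-type aside stops at the cycle-consistency step, which is harmless since you invoke Theorem~\ref{condition_th} rather than reprove it.
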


Driven by constant curvature, the Ricci flow \eqref{flow-equation1} evolves edge weights so that bridges acting as bottlenecks are assigned significantly higher weights (see the dumbbell graph $D_{6,6}$ in  Section \ref{simulations}). This evolution effectively identifies bottlenecks and assigns optimal weights in networks with girth of at least 6. 
While the evolution of the previous Ollivier Ricci flow (see e.g.  \cite{MY24} and \cite{2019Community}) can also identify network structures, our approach primarily focuses on constant curvature-driven weights.

From the perspective of polygonal partitions of surfaces, Theorem \ref{them:1.1} can generally be referred to the Discrete Uniformization Theorem, serving as the discrete analogue to the classical Riemann Uniformization Theorem for manifolds.
The study of this problem   originates from Thurston’s work on circle packing. Thurston \cite{thurston1978geometry} introduced triangulated surfaces endowed with circle packing (CP) metrics and proved the existence conditions for constant discrete Gaussian curvature (defined by angle deficit) metrics; subsequently, the combinatorial Ricci flow developed by Chow and Luo \cite{2002Combinatorial} provides an algorithm that allows any initial metric to converge through the Ricci flow towards the constant curvature metric. In addition to CP metrics, Luo \cite{luo2004combinatorial}  also adopted piecewise linear (PL) metrics on surface triangulations, establishing the necessary and sufficient conditions for the existence of PL metrics with constant curvature and investigating the convergence of solutions for the combinatorial Yamabe flow with surgeries. Discrete Calabi flow \cite{ge2018combinatorial} has also been introduced to address a broader range of geometric problems.  Combinatorial Ricci flows have received considerable attention in the literature. For further details, the reader is referred to \cite{ge2021circle,ge2021combinatorial,ge2024character,gu2018discrete}.

In this paper, we employ the Lin-Lu-Yau curvature flow, which is fundamentally distinct from the  discrete Gaussian curvature. Lin-Lu-Yau curvature offers a simpler alternative to discrete Gaussian curvature as it operates exclusively on vertices and edges, bypassing the need for higher-dimensional face information. Specifically, in contrast to Luo’s condition in \cite{luo2004combinatorial} for the existence of a PL metric with constant discrete Gaussian curvature, our condition \eqref{ns} for the existence of weights yielding constant Lin-Lu-Yau curvature is less restrictive when viewed from the perspective of the dual tessellation, see Corollary \ref{coro_sur}.
However, the Lin-Lu-Yau  curvature flow still plays a role in polygonal partitions of surfaces. Perform a polygonal tiling of the surface (excluding triangles, quadrilaterals, and pentagons) and extract the 1-skeleton of the tiling. The initial edge weights are set as the geodesic distance between the endpoints of the edges. The Ricci flow \eqref{flow-equation1} eventually evolves these edge weights into those corresponding to constant curvature. Notice that for a regular graph or a semi-regular bipartite graph with constant curvature, the weights are also constant, see Proposition \ref{regular}.  The constant weights can still be treated as edge metrics. 

\begin{corollary} [Proposition \ref{regular} and Corollary \ref{main_coro}]
Let $(T, \bm{\omega}_0)$ be a polygonal tiling of a closed connected surface $X$, where $\bm{\omega}_0$ denotes the geodesic distance between the two endpoints of the set of edges. Assume that the 1-skeleton $G$ of $T$ is a regular or semi-regular bipartite graph with girth greater than 5. Then the solution to  \eqref{flow-equation1} on $G$ converges to a constant value, which corresponds to a polygonal surface partition with constant edge lengths. 
\end{corollary}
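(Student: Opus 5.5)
The corollary combines two ingredients: Corollary \ref{main_coro}, which gives exponential convergence of \eqref{flow-equation1} to a constant-curvature weight once condition \eqref{ns} holds, and Proposition \ref{regular}, which says that on a regular or semi-regular bipartite graph a constant-curvature weight is itself constant. I would proceed in three steps.

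\textbf{Step 1: the convergence hypotheses hold.} The 1-skeleton $G$ of a tiling of a closed connected surface is finite and connected. By assumption its girth is at least $6$. The initial weights $\bm\omega_0$ are geodesic distances between distinct endpoints, so they are strictly positive. Thus the positivity and girth hypotheses of Corollary \ref{main_coro} are satisfied.

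\textbf{Step 2: condition \eqref{ns} holds.} This is the main point to check. If $G$ is $d$-regular, then for any $\emptyset\neq\Omega\subsetneqq V$ we have $2|E(\Omega)|\le d|\Omega|$. Equality would force every vertex of $\Omega$ to have all its neighbours inside $\Omega$, which by connectivity gives $\Omega=V$. Hence $|E(\Omega)|/|\Omega|<d/2=|E|/|V|$.

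In the semi-regular bipartite case, with degrees $a$ on part $A$ and $b$ on part $B$, we have $|E|=a|A|=b|B|$ and $|E|/|V|=ab/(a+b)$. Write $\Omega_A=\Omega\cap A$ and $\Omega_B=\Omega\cap B$. Then $|E(\Omega)|\le\min(a|\Omega_A|,\,b|\Omega_B|)$. A minimum is at most any convex combination, so with weights $b/(a+b)$ and $a/(a+b)$,
\[\min(a|\Omega_A|,\,b|\Omega_B|)\le \frac{ab}{a+b}\bigl(|\Omega_A|+|\Omega_B|\bigr)=\frac{ab}{a+b}|\Omega|.\]
For equality throughout, we would need $a|\Omega_A|=b|\Omega_B|=|E(\Omega)|$. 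That means every edge at a vertex of $\Omega$ stays inside $\Omega$, so $\Omega$ is a union of connected components, hence $\Omega=V$. Therefore \eqref{ns} holds with strict inequality.

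\textbf{Step 3: identify the limit and interpret it.} Corollary \ref{main_coro} now gives exponential convergence of the solution to \eqref{flow-equation1} to a weight $\bm\omega^\infty$ with constant curvature $\bar\kappa$. Proposition \ref{regular} then shows $\bm\omega^\infty$ is constant on $E$. Assigning this common length to every edge of $T$ yields a polygonal partition of $X$ with constant edge lengths.

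The only step needing care is the strictness argument in the bipartite case; everything else follows by citing the earlier results.
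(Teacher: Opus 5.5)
Your proof is correct, and its overall shape (Corollary \ref{main_coro} for convergence, Proposition \ref{regular} to identify the limit) matches the paper's proof, which is given only as a citation of these two results. Your Step 2 reaches the key hypothesis by a different route.

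The paper never checks \eqref{ns} by hand. The sufficiency half of Proposition \ref{regular} already exhibits a constant weight with $\kappa_e\equiv\bar\kappa$, because $|V|/|E|=2/k$, resp.\ $1/a+1/b$. So $\bar\kappa$ is attainable, and Theorem \ref{main2} gives exponential convergence directly. Condition \eqref{ns} then follows for free from the necessity half of Theorem \ref{condition_th}.

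Your degree-counting argument is valid in both cases:
\begin{itemize}
\item the bound $2|E(\Omega)|\le d|\Omega|$ in the regular case;
\item the chain $|E(\Omega)|\le\min(a|\Omega_A|,b|\Omega_B|)\le \frac{ab}{a+b}|\Omega|$ in the bipartite case;
\item the equality analysis forcing $E(\Omega,\Omega^c)=\emptyset$, hence $\Omega=V$ by connectivity.
\end{itemize}
It has the merit of confirming the density criterion purely combinatorially, without passing through the variational existence proof of Theorem \ref{condition_th}. However, it is redundant once you notice that attainability is already in hand from Proposition \ref{regular}. The paper's route avoids the case analysis entirely.

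One small precision in Step 3. Concluding that the limit $\bm\omega^\infty$ is constant uses the uniqueness of constant-curvature weights up to scaling, i.e.\ Proposition \ref{injective}. The paper invokes this inside the proof of Proposition \ref{regular}, not in its statement, so you should cite it explicitly. You can also pin down the constant as the geometric mean of the entries of $\bm\omega_0$: by Proposition \ref{pro-curvature}, $\sum_i(\kappa_i-\bar\kappa)=0$, so $\sum_i\ln\omega_i$ is conserved along \eqref{flow-equation1}.
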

For instance, on a surface with a hexagonal circle packing, the Ricci flow \eqref{flow-equation1} transforms the  hexagonal grid with distorted edge lengths into a regular hexagonal grid where all edge lengths are uniform.
This result serves as an analogue for hexagonal tilings on the torus of the conclusion by Rodin and Sullivan \cite{rodin1987convergence} , which states that a circle packing in the plane with the hexagonal pattern is the regular hexagonal packing.
The simulation procedure for the Ricci flow \eqref{flow-equation1}  is provided in Section \ref{simulations}.
In fact, this is not limited to regular and semi-regular bipartite graphs; as long as the converged result satisfies the polygon closure condition, i.e., the length of the longest edge must be less than the sum of the remaining edges in any cycle, it can be treated as an edge metric and applied to surface polygonal tessellation. This provides a positive answer to Question 2 (concerning the 2-dimensional combinatorial Ricci flow for piecewise constant curvature metrics) proposed by Chow and Luo \cite{2002Combinatorial}.  When combinatorial curvature flows  investigate the evolutionary relationship between PL metrics and combinatorial curvature \cite{zglg12}, their evolution often leads to the failure of the triangle inequality for edge metrics.  A distinct advantage of \eqref{flow-equation1} lies in investigating the evolutionary relationship between edge weights and edge curvature; it does not require the polygon closure condition to be maintained throughout the process, provided that the initial and converged values satisfy this condition.

The structure of the paper is as follows: In Section \ref{section2},  the definitions of Lin-Lu-Yau curvature (flow) are introduced, and the existence and uniqueness of the solution to the curvature flow  with prescribed curvature on general graphs are established. In Section \ref{section3}, we only consider graphs with girth at least $6$, provide the necessary and sufficient conditions for the existence of constant curvature and prove the convergence of the Ricci flow with the prescribed curvature. In Section \ref{simulations}, we present the applications and numerical simulations of the Ricci flow with the prescribed constant curvature.

\section{Lin-Lu-Yau curvature (flow)  on graphs}\label{section2}
Let $G=(V,E,\omega)$ be a weighted graph, where $V$ is the set of vertices, $E\subset V\times V$ is the set of edges, and $\omega: E\rightarrow\mathbb{R^+}$ is the weight function. In this paper, we only consider finite and connected graphs. 
Let $\Omega \subseteq V$ be a subset of vertices. Let $E(\Omega)$ denote the set of edges in the subgraph induced by $\Omega$. We denote the cardinalities of the vertex set and the edge set of this induced subgraph by $|\Omega|$ and $|E(\Omega)|$, respectively. The set of edges connecting $\Omega$ and its complement $\Omega^c$ is denoted by $E(\Omega, \Omega^c)$. Furthermore, the distance $d(u, v)$ between two vertices $u, v \in V$ is defined as the length of the shortest path connecting them. Denote 
\[d(x):=\#\{y\in V: y\sim x\}\]
by  the degree of $x\in V$, 
where $y\sim x$ means that $(x,y)\in E.$ Let $\mathbb{R}^V$ be the set of real function on $V$.

\subsection{Lin-Lu-Yau curvature}
Lin-Lu-Yau  curvature is a modified version of the Ollivier Ricci curvature \cite{2007Ricci}, designed to measure the discrepancy between the wasserstein distance and the graph distance. Let \( x\neq y\in V \), \( \mu_x\) and \( \mu_y \) are two probability distributions defined on $V$. The wasserstein distance $W(\mu_x,\mu_y)$ is defined by 
\[W(\mu_x,\mu_y)=\inf_{A \in \Pi(\mu_x, \mu_y)} \sum_{u,v \in V} d(u,v) A(u,v),\]
where \( A : V \times V \to [0, 1] \) is the transport plan from \( \mu_x \) to \( \mu_y \), satisfying
\[
\begin{cases}
\sum_{v \in V} A(u,v) =  \mu_x (u), & u \in V, \\
\sum_{u \in V} A(u,v) =  \mu_y (v), & v \in V,
\end{cases}
\]
and the minimum is taken over all transport plans from \(  \mu_x \) to \(  \mu_x \).

For $\alpha > 0$,  a finitely supported probability measure is denoted by
\begin{equation*}
    m_x^\alpha(y):=\left\{\begin{aligned}
    &\alpha,&~~ y=x,\\
    &(1-\alpha)\frac{\omega_{xy}}{m(x)},&~~y\sim x,\\
    &0,&~~\mbox{otherwise,}
\end{aligned}
  \right.
\end{equation*}
where $m(x):=\sum_{y\sim x}\omega_{xy}$.
 Lin-Lu-Yau curvature is defined by,   for $x\neq y$ 
\[\kappa(x,y):=\lim_{\alpha\rightarrow 1^-}\frac{1}{1-\alpha}\left(1-\frac{W(m_x^\alpha,m_y^\alpha)}{d(x,y)}\right),\]
where $W(m_x^\alpha,m_y^\alpha)$ denotes the wasserstein distance between $m_x^\alpha$ and $m_y^\alpha$.
One limit-free form of Ollivier-Ricci curvature was proposed by Münch and Wojciechowski \cite{Florentin2017Ollivier}. Denote
      \[\nabla_{xy}f := \frac{f(x) - f(y)}{d(x, y)}\]
      by the gradient of $f$ with respect to $x$ and $y.$
Let $\|\nabla f\|_\infty=\sup_{x,y\in V}|\nabla_{xy}f|$. For $K \geq 0$,
\[
\operatorname{Lip}(K) := \{ f \in C(V) : \|\nabla f\|_\infty \leq K \}
\]
is the set of all $K$-Lipschitz functions on $V$ with respect to the
graph metric $d$. Then, Lin-Lu-Yau curvature can be re-expressed as
\begin{equation}\label{ollivier}
\kappa(x,y)=\inf_{f\in \mathcal{F} }\nabla_{xy}\Delta f,
\end{equation}
where $\mathcal{F}:=\{f\in \operatorname{Lip}(1),\nabla_{yx}f=1\}$, and the Laplacian is defined by 
\[\Delta f(x)=\frac{1}{m(x)}\sum_{z\sim x}\omega_{xz}(f(z)-f(x)),~~\forall x\in V.\]

Notice that when $f\in \operatorname{Lip}(1)$, we have for any $x\in V,$
$$|\Delta f(x)|\leq\frac{1}{m(x)}\sum_{z\sim x}\omega_{xz}|f(z)-f(x)|\leq \frac{1}{m(x)}\sum_{z\sim x}\omega_{xz}d(x,z)=1$$
From it, we have the following bounds of  $\kappa$.
\begin{lemma} 
The Ricci curvature $\kappa(x,y)$ for any $x,y\in V$ is bounded. Specifically,
\[-\frac{2}{d(x,y)}\leq\kappa(x,y)\leq \frac{2}{d(x,y)}.\]
In particular, $\kappa_e\in[-2,2]$ for any $e\in E$.
\end{lemma}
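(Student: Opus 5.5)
We will work directly from the limit-free expression \eqref{ollivier}, $\kappa(x,y)=\inf_{f\in\mathcal F}\nabla_{xy}\Delta f$, together with the bound $|\Delta f(z)|\le 1$ for every $f\in\operatorname{Lip}(1)$ and every $z\in V$, which was just observed. Since $\mathcal F\subseteq \operatorname{Lip}(1)$, every $f\in\mathcal F$ satisfies
\[
|\nabla_{xy}\Delta f|=\frac{|\Delta f(x)-\Delta f(y)|}{d(x,y)}\le \frac{|\Delta f(x)|+|\Delta f(y)|}{d(x,y)}\le \frac{2}{d(x,y)}.
\]
So the quantities over which the infimum is taken all lie in the interval $[-2/d(x,y),\,2/d(x,y)]$.

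Taking the infimum preserves the lower bound, giving $\kappa(x,y)\ge -2/d(x,y)$. For the upper bound we need $\mathcal F\neq\emptyset$; otherwise the infimum would be $+\infty$. We exhibit an explicit member: $f(z)=d(z,x)$, adjusted for the paper's convention on $\nabla_{yx}$. Since $\nabla_{yx}f=(f(y)-f(x))/d(x,y)=d(x,y)/d(x,y)=1$, and the triangle inequality gives $|f(z)-f(w)|\le d(z,w)$, we have $f\in\operatorname{Lip}(1)$ and $\nabla_{yx}f=1$. Hence $f\in\mathcal F$, and $\kappa(x,y)\le\nabla_{xy}\Delta f\le 2/d(x,y)$.

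For an edge $e=xy$ we have $d(x,y)=1$: the distance is the combinatorial graph distance, and $x\ne y$ are adjacent. This yields $\kappa_e\in[-2,2]$.

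The only subtle point is the nonemptiness of $\mathcal F$. The distance function handles it, so there is no real obstacle. One might alternatively verify the bound from the original limit definition, using $W\le d(x,y)+2(1-\alpha)$ via a coupling through $x$ and $y$. The $\Delta$-based route above is more direct.
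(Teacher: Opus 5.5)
Your proposal is correct and follows the paper's proof: bound $|\nabla_{xy}\Delta f|\le 2/d(x,y)$ for every $f\in\operatorname{Lip}(1)$ using $|\Delta f|\le 1$, then pass to the infimum over $\mathcal F$. Your check that $\mathcal F\neq\emptyset$ via $f=d(\cdot,x)$, which is needed for the upper bound, fills in a detail the paper leaves implicit.
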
\label{bounds}
\begin{proof} For any $f\in \operatorname{Lip}(1)$,
\[|\nabla_{xy}\Delta f|= \left|\frac{\Delta f(x) - \Delta f(y)}{d(x, y)}\right|\leq \frac{|\Delta f(x)| + |\Delta f(y)|}{d(x, y)}\leq \frac{2}{d(x,y)},\] 
it follows that the bounds of $\kappa(x,y)$.
\end{proof}

\subsection{The Ricci flow with prescribed curvature}
In this subsection, we consider the following Lin-Lu-Yau Ricci flow  with prescribed curvature:
for any $e\in E$,
\begin{equation*}\label{flow-equation3}
    \frac{d}{dt}\omega(t,e)=-(\kappa(t,e)-\kappa^*(e))\omega(t,e),~~ t>0,
\end{equation*}
with a positive initial value $\omega(t,e)=\omega_0(e),$ where $\kappa^*(e)$ is a prescribed curvature. 
Let $E=\{e_1,\cdots,e_n\}$, and $\bm \omega =(\omega_1,\omega_2,\cdots,\omega_n)$ with $\omega_i=\omega(e_i)$. Then the Ricci flow with the prescribed curvature  can be rewritten as 
\begin{equation}\label{case_a} 
    \begin{cases}
\frac{d}{dt}\omega_i=-(\kappa_i(\bm \omega)-\kappa_i^*)\omega_i  \\
\bm \omega(0)=\bm \omega_0
\end{cases}
\end{equation}
with a positive initial value $\bm \omega_0=(\omega_{1,0},\omega_{2,0},\cdots,\omega_{n,0})$,
where  $\bm \kappa^*=(\kappa_1^*,\kappa_2^*,\cdots,\kappa_n^*)$ is the prescribed curvature vector.
The following results were established for directed graphs in our earlier work \cite{2025Ricci}. For the sake of completeness, we provide the proof for the case of undirected graphs below.
\begin{lemma}\label{local_lip}
The Ricci curvature $\kappa_i$ for any $i$ is locally Lipschitz continuous  on \[A:= \{ \boldsymbol{\omega} \in \mathbb{R}^{n} \mid \omega_i > 0,  \forall i\}.\]
\end{lemma}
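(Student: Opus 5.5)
Throughout, the graph distance $d$ is fixed and independent of $\bm\omega$; only the Laplacian depends on the weights. For a fixed edge $e_i=xy$, the plan is to use the limit-free formula \eqref{ollivier},
\[
\kappa_i(\bm\omega)=\inf_{f\in\mathcal F}\nabla_{xy}\Delta_{\bm\omega} f,
\]
and to exploit the fact that the admissible set $\mathcal F$ does not depend on $\bm\omega$. This is exactly where the time-invariance of $d$ is used. Since $\nabla_{xy}\Delta f$ is unchanged when a constant is added to $f$, we may normalize $f(x)=0$. Because $G$ is finite and connected, this makes $\mathcal F$ a compact subset of $\mathbb R^V$, with $|f(z)|\le \operatorname{diam}(G)$ for all $z$. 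The standard principle is then: if each function $\bm\omega\mapsto \nabla_{xy}\Delta_{\bm\omega}f$ is Lipschitz on a set $K$, with a constant $L_K$ independent of $f\in\mathcal F$, then the infimum is also $L_K$-Lipschitz on $K$. Indeed, $|\inf_f g_f(\bm\omega)-\inf_f g_f(\bm\omega')|\le\sup_f|g_f(\bm\omega)-g_f(\bm\omega')|$.

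So the key step is a uniform Lipschitz bound for
\[
\bm\omega\mapsto \Delta_{\bm\omega}f(v)=\sum_{z\sim v}\frac{\omega_{vz}}{m(v)}\,\bigl(f(z)-f(v)\bigr),\qquad v\in\{x,y\}.
\]
Fix a compact set $K\subset A$. Then there are constants $0<c\le \omega_j\le C$ on $K$ for all $j$, so $m(v)\ge c$. Consider the coefficient functions $\bm\omega\mapsto \omega_{vz}/m(v)$. Their partial derivatives are
\[
\frac{\delta_{j,vz}}{m(v)}-\frac{\omega_{vz}}{m(v)^2}\,\mathbf 1_{e_j\ni v},
\]
and each is bounded in absolute value by $2/c$ on $K$. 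Moreover, $|f(z)-f(v)|\le d(v,z)=1$ for every $f\in\operatorname{Lip}(1)$. Hence $\Delta_{\bm\omega}f(v)$ has gradient in $\bm\omega$ bounded by a constant $L$ depending only on $c$ and $d(v)$, and not on $f$.

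Next, $d(x,y)\ge 1$, so $\nabla_{xy}\Delta f$ is $2L$-Lipschitz on convex subsets of $K$. Covering any point of $A$ by a small closed ball contained in $A$ (convex and compact) then gives local Lipschitz continuity of $\kappa_i$ by the infimum principle above.

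There is one step I expect to need care with: confirming that the infimum is finite and is the genuine Lin–Lu–Yau curvature. Finiteness follows from the bound in Lemma \ref{bounds}. That the infimum agrees with the Lin–Lu–Yau curvature is the cited Münch–Wojciechowski identity, which is assumed. Beyond that, the only subtlety is making sure that no part of the argument lets $\mathcal F$ depend on $\bm\omega$. It does not, precisely because the paper keeps distances fixed.
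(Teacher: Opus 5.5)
Your proposal is correct and follows essentially the same route as the paper: both use the limit-free formula with the $\bm\omega$-independent admissible class $\mathcal F$, bound $|\nabla_{xy}\Delta_{\bm\omega}f-\nabla_{xy}\Delta_{\bm\omega'}f|$ uniformly in $f\in\operatorname{Lip}(1)$ via $|f(z)-f(v)|\le 1$ and two-sided weight bounds, and then pass this uniform bound to the infimum. The differences are cosmetic. You estimate through a gradient bound and the mean value theorem on small convex balls, and your $\sup_f$ principle avoids needing minimizers. The paper instead works with optimal functions $f,f'$ and a direct algebraic estimate of $|m'(x)\omega_{xu}-m(x)\omega'_{xu}|$.
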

\begin{proof}
Since the curvature is scale-invariant with respect to $\bm \omega$, that is, the curvature remains unchanged when the weight vector is multiplied by a positive constant.
Let $\bm \omega\neq c\bm \omega'$ with $c>0$, and there exist $\delta>0$ such that 
\[\delta^{-1}\leq\omega_i\leq \delta, \quad \forall i.\]
Let $f$ and $f'$ be the optimal functions that attain the infimum in \eqref{ollivier} for $\bm \omega$ and $\bm \omega'$, respectively. Let $\Delta'$ be the operator associated with $\bm \omega'$. Thus, for  $e_i=(x,y)$, 
  \begin{align*}
      |\kappa_i(\bm \omega)-\kappa_i(\bm \omega')|
      &=|\nabla_{xy}\Delta f-\nabla_{xy}\Delta' f'|\\
      &\leq \max\{|\nabla_{xy}\Delta f-\nabla_{xy}\Delta' f|,|\nabla_{xy}\Delta f'-\nabla_{xy}\Delta'f'|\}.
  \end{align*}
Due to
\begin{align*}|\nabla_{xy}\Delta f-\nabla_{xy}\Delta' f|
&=|\Delta f(x)-\Delta f(y)-\Delta'f(x)+\Delta' f(y)|\\
&\leq\sum_{u\sim x}\left|\frac{\omega_{xu}}{m(x)}-\frac{\omega'_{xu}}{m'(x)}\right||f(u)-f(x)|+\sum_{v\sim y}\left|\frac{\omega_{yv}}{m(y)}-\frac{\omega'_{yv}}{m'(y)}\right||f(v)-f(y)|\\
&\leq\sum_{u\sim x}\frac{|m'(x)\omega_{xu}-m(x)\omega'_{xu}|}{m(x)m'(x)}+\sum_{v\sim y}\frac{|m'(y)\omega_{yv}-m(y)\omega'_{yv}|}{m(y)m'(y)},
\end{align*}
and 
\[
    |m'(x)\omega_{xu}-m(x)\omega'_{xu}|\leq m'(x)|\omega_{xu}-\omega'_{xu}|+\omega'_{xu}|m'(x)-m(x)|\leq 2\delta |E|\|\bm \omega-\bm\omega'\|_\infty,
\]
where $\|\bm \omega-\bm\omega'\|_\infty=\sup_{i}|\omega_i-\omega_i'|$. 
We have
\[|\nabla_{xy}\Delta f-\nabla_{xy}\Delta' f|\leq 4\delta^3 |E|^2\|\bm \omega-\bm\omega'\|_\infty.\]
Similarly, $|\nabla_{xy}\Delta f'-\nabla_{xy}\Delta'f'|$ admits the same bound.
Therefore,
\[|\kappa_i(\bm \omega)-\kappa_i(\bm \omega')|\leq 4\delta^3 |E|^2\|\bm \omega-\bm\omega'\|_\infty,\]
which completes the proof.
\end{proof}

\begin{theorem}  \label{main1}
Equations \eqref{case_a} admit a unique positive solution on $[0,\infty)$ with the positive initial conditions  \(\boldsymbol{\omega}_0\).
\end{theorem}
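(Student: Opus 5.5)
The plan is to combine Picard--Lindelöf local well-posedness with a priori exponential bounds that keep the solution away from the boundary of $A$ on every finite time interval. Write the right-hand side of \eqref{case_a} as $F_i(\bm\omega)=-(\kappa_i(\bm\omega)-\kappa_i^*)\omega_i$. By Lemma \ref{local_lip}, each $\kappa_i$ is locally Lipschitz on the open set $A$. Since $\omega_i\mapsto\omega_i$ is smooth and products of locally Lipschitz functions are locally Lipschitz on compact subsets, $F=(F_1,\dots,F_n)$ is locally Lipschitz on $A$. The Picard--Lindelöf theorem then gives $T>0$ and a unique solution $\bm\omega(t)\in A$ on $[0,T)$ with $\bm\omega(0)=\bm\omega_0$. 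Let $[0,T_{\max})$ be the maximal interval of existence; uniqueness on any common interval follows from the local Lipschitz property by the standard Gronwall argument.

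The main step is to show $T_{\max}=\infty$ by ruling out blow-up or degeneration to $\partial A$ in finite time. For $t<T_{\max}$, the solution stays in $A$, and we may divide by $\omega_i$:
\[
\frac{d}{dt}\log\omega_i(t)=-(\kappa_i(\bm\omega(t))-\kappa_i^*).
\]
By Lemma \ref{bounds}, $|\kappa_i|\le 2$, so with $C:=2+\max_i|\kappa_i^*|$ we get $|\frac{d}{dt}\log\omega_i|\le C$. Integrating yields
\[
\omega_{i,0}e^{-Ct}\le\omega_i(t)\le\omega_{i,0}e^{Ct}.
\]
Hence on any finite interval $[0,T']$ the trajectory lies in the compact set $K_{T'}=\prod_i[\omega_{i,0}e^{-CT'},\omega_{i,0}e^{CT'}]\subset A$.

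Since $F$ is Lipschitz on the compact set $K_{T'}$ (the constant in Lemma \ref{local_lip} depends only on the $\delta$ bounding the weights above and below), the standard extension theorem applies: a maximal solution with $T_{\max}<\infty$ must leave every compact subset of $A$. This contradicts the bounds above with $T'=T_{\max}$, so $T_{\max}=\infty$, and positivity holds throughout.

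The only delicate point is the local Lipschitz claim itself, in particular its uniformity on sets where $\delta^{-1}\le\omega_i\le\delta$. This is exactly what Lemma \ref{local_lip} provides, so the remaining obstacle is just making sure the compact sets $K_{T'}$ fit into such a $\delta$-box, which they do by construction.
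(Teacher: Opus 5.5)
Your proposal is correct and takes essentially the same route as the paper: Picard--Lindel\"of via Lemma \ref{local_lip} gives local existence and uniqueness, and the a priori bound $|\kappa_i-\kappa_i^*|\le 2+\max_i|\kappa_i^*|$ gives $\omega_{i,0}e^{-Ct}\le\omega_i(t)\le\omega_{i,0}e^{Ct}$, which rules out both blow-up and degeneration to $\partial A$ at a finite $T_{\max}$. Working with $\frac{d}{dt}\log\omega_i$ and the ``a maximal solution leaves every compact subset of $A$'' form of the extension theorem is a slightly cleaner version of the paper's escape-to-infinity/approach-the-boundary dichotomy.
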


\begin{proof}
Let $\boldsymbol{\omega}\neq c \boldsymbol{\omega}'$, $c\in \mathbb{R}$. Due to
\begin{equation*}
\begin{aligned}
|(\kappa_i(\boldsymbol{\omega}) -\kappa_i^*)\omega_i - (\kappa_i(\boldsymbol{\omega'}) -\kappa_i^*)\omega_i'| 
&= |\kappa_i(\boldsymbol{\omega})\omega_i - \kappa_i(\boldsymbol{\omega}')\omega_i'-\kappa_i^*(\omega_i-\omega_i')| \\
&\le \left|\kappa_i(\boldsymbol{\omega}) - \kappa_i(\boldsymbol{\omega}')\right|\cdot \omega_i + \left(|\kappa_i(\boldsymbol{\omega}')|+\max_i|\kappa^*_i|\right) \, \|\boldsymbol{\omega} - \boldsymbol{\omega}'\|_\infty\\
\end{aligned}
\end{equation*}
Due to Lemma \ref{bounds} and Lemma \ref{local_lip}, the Picard-Lindelöf theorem guaranties a time \(T > 0\) such that a unique solution to \eqref{case_a} exists on \([0, T]\).

Next, we demonstrate the long time existence. Let 
$$T_{\max}:=\max\{T:\text{Equations \eqref{case_a}   has a solution on~} [0,T]\}.$$
Assume that \( T_{\max} < \infty \).  By the extension theorem for ODEs, as \( t \rightarrow T_{\max}^{-} \), \(\boldsymbol{\omega}(t)\) must either 
  \begin{enumerate}
    \item escape to infinity, i.e., \(\|\boldsymbol{\omega}(t)\| \rightarrow \infty\), or
    \item approach the boundary \(\partial A = \{ \boldsymbol{\omega} \mid \min_i \omega_i = 0 \}\).
  \end{enumerate}
By Lemma \ref{bounds}, \(|\kappa_i(\boldsymbol{\omega}) - \kappa_i^*| \leq M\) where \(M = 2+\max_i|\kappa_i^*|\).  
It follows that for any $t\in[0,T]$

  \[
    \omega_i(t) \geq \omega_i(0) e^{-M T_{\max}} > 0,  \]
    and
    \[
    \omega_i(t)  \leq \omega_i(0) e^{M T_{\max}} <\infty .
  \]

\item Thus, \(\boldsymbol{\omega}(t)\) neither escapes to infinity nor reaches \(\partial A\), contradicting \( T_{\max} < \infty \).
This completes the proof.
\end{proof}
\section{Graphs with girth at least $6$}\label{section3}
In this section, we only consider the graph not contained in any $3-,4-$ or $5-$cycles. For \(e=(x, y)\), Lin-Lu-Yau curvature is
\begin{equation}\label{cur_tree}
    \kappa_e=2\omega_e\left(\frac{1}{m(x)}+\frac{1}{m(y)}\right)-2
\end{equation}
by choosing $f$ in \eqref{ollivier} as 
\[f(z) =
\begin{cases}
0 & : z \sim x \text{ and } z \neq y ,\\
1 & : z = x ,\\
2 & : z = y, \\
3 & : z \sim y \text{ and } z \neq x,
\end{cases}\]
see \cite{Florentin2017Ollivier}  for details.

\subsection{The existence of weights for constant curvature}

\begin{proposition}\label{pro-curvature}
Assume that $\omega_e>0$ for any $e\in E$. Then
\begin{itemize}
    \item $\kappa_e\in (-2,2]$, and $\kappa_e=2$ if and only if the graph is $K_2$, that is, a graph with two vertices and one edge.
    \item $\sum_{e\in E}\kappa_e=2(|V|-|E|)$.
\end{itemize}
\end{proposition}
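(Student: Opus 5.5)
Both items follow from the explicit formula \eqref{cur_tree}, $\kappa_e=2\omega_e\left(\frac{1}{m(x)}+\frac{1}{m(y)}\right)-2$ for $e=(x,y)$, which is valid since the girth is at least $6$. So the plan is to work only with this formula and with the definition $m(x)=\sum_{z\sim x}\omega_{xz}$.

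\emph{First item.} All weights are positive and $m(x)=\omega_e+\sum_{z\sim x,z\neq y}\omega_{xz}$. Hence $m(x)\geq\omega_e>0$, and likewise $m(y)\geq\omega_e$. This gives $0<\omega_e/m(x)\leq 1$ and $0<\omega_e/m(y)\leq 1$. Therefore $\kappa_e>-2$, since the bracket is strictly positive, and $\kappa_e\leq 2\cdot 2-2=2$. Equality $\kappa_e=2$ holds iff $\omega_e=m(x)$ and $\omega_e=m(y)$. By positivity of the weights this means $d(x)=d(y)=1$, so $x$ and $y$ have no neighbours other than each other. Since the graph is connected, it is then exactly $K_2$. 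Conversely, on $K_2$ we have $m(x)=m(y)=\omega_e$, so $\kappa_e=2$.

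\emph{Second item.} Sum \eqref{cur_tree} over all edges and exchange the order of summation, grouping the terms by vertex:
\[
\sum_{e\in E}\kappa_e=2\sum_{(x,y)\in E}\left(\frac{\omega_{xy}}{m(x)}+\frac{\omega_{xy}}{m(y)}\right)-2|E|
=2\sum_{x\in V}\frac{1}{m(x)}\sum_{y\sim x}\omega_{xy}-2|E|.
\]
The inner sum equals $m(x)$, so each vertex contributes exactly $1$. The total is therefore $2|V|-2|E|$, as claimed.

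There is no real obstacle. The only point needing care is the regrouping, which should be checked to count each (edge, endpoint) pair exactly once. One must also note that isolated vertices cannot occur, because the graph is connected and $|E|\geq1$, so every $m(x)>0$. The substantive ingredient, formula \eqref{cur_tree}, is already taken from \cite{Florentin2017Ollivier} under the girth assumption.
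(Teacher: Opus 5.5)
Your proof is correct and follows the paper's argument. The bounds come directly from \eqref{cur_tree}; the paper calls this step obvious, and you spell out the details, including the equality case via $d(x)=d(y)=1$ and connectedness. The identity $\sum_{e}\kappa_e=2(|V|-|E|)$ comes from the same regrouping of $\sum_{e}\omega_e\left(\frac{1}{m(x)}+\frac{1}{m(y)}\right)$ by vertex, with each vertex contributing exactly $1$.
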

\begin{proof} 
By \eqref{cur_tree}, the first property is obvious. As for the second one,
\[
\sum_{e\in E}\kappa_e= 2 \sum_{e\in E} \omega_e\left(\frac{1}{m(x)}+\frac{1}{m(y)}\right)- 2 |E|.
\]
The summation on the right hand side can be regrouped as
\[
\sum_{e\in E} \omega_e\left(\frac{1}{m(x)}+\frac{1}{m(y)}\right) = \sum_{z \in V} \sum_{e_z\in N(z)} \omega_{e_z}\frac{1}{m(z)},
\]
where \( N(z)\) is the set of edges incident to node \(z\). For each node \(z\), we have
\[
\sum_{e_z\in N(z)} \omega_{e_z}\frac{1}{m(z)} = 1.
\]
Therefore,
\[
\sum_{e\in E} \omega_e\left(\frac{1}{m(x)}+\frac{1}{m(y)}\right) = |V|.
\]
That completes the proof.
\end{proof}

Based on the second property, we can define the average curvature on graphs with girth of at least 6 as
\[\bar{\kappa}:=\frac{\sum_{e\in E}\kappa_e}{|E|}=2\left(\frac{|V|}{|E|}-1\right).\]
\begin{remark}
Based on the expression for average curvature, we can classify graphs with girth at least 6 by the sign of the average curvature, which is determined by comparing the number of vertices and edges.
\begin{itemize}
    \item For any tree, due to $|V|=|E|+1$, we have 
    \(\bar{\kappa}=\frac{2}{|V|-1}(>0).\)
    \item For any unicyclic graph containing exactly one cycle of length at least $6$, i.e. $|V|=|E|$, we have 
    \(\bar{\kappa}=0\).
    \item For any graph containing more than one cycle of length at least $6$, i.e. $|V|<|E|$, thus 
   $ \bar{\kappa}< 0.$ 
\end{itemize}
\end{remark}

\begin{proposition} \label{injective}
Let $G=(V,\omega)$ be a graph with girth at least $6$. The map $\Pi:\mathbb{R}^n_{>0}\rightarrow \mathbb{R}^n$, which sends a weight $\bm \omega$ to its Ricci curvature $\bm \kappa$, when restricted to the hypersurface 
\[U:=\{(\omega_1,\cdots, \omega_n)\in R^n_{>0} |\sum_{i=1}^n\omega_i = 1\}\] 
is injective. That is, the weight $\bm \omega$  is determined, up to a scalar multiplication, by the curvature $\bm \kappa$.
\end{proposition}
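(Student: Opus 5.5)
The plan is to argue directly from the explicit formula \eqref{cur_tree} with a maximum-principle argument on the ratio of two weight vectors. Suppose $\bm\omega,\bm\omega'\in\mathbb{R}^n_{>0}$ have the same curvature, $\Pi(\bm\omega)=\Pi(\bm\omega')$. By \eqref{cur_tree}, this means that for every edge $e=(x,y)$,
\begin{equation*}
\omega_e\left(\frac{1}{m(x)}+\frac{1}{m(y)}\right)=\omega'_e\left(\frac{1}{m'(x)}+\frac{1}{m'(y)}\right),
\end{equation*}
where $m'$ is built from $\bm\omega'$. I will show that $\bm\omega'=\lambda\bm\omega$ for some $\lambda>0$. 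On $U$ the normalization $\sum_i\omega_i=\sum_i\omega'_i=1$ then forces $\lambda=1$, which is injectivity.

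Define $r_e:=\omega'_e/\omega_e>0$ and $\lambda:=\max_{e\in E} r_e$. Choose an edge $e_0=(x,y)$ with $r_{e_0}=\lambda$. Since every edge satisfies $\omega'_{z}\le\lambda\omega_{z}$, summing over the edges at $x$ gives $m'(x)\le\lambda m(x)$, and similarly $m'(y)\le\lambda m(y)$. Hence
\begin{equation*}
\frac{\omega'_{e_0}}{m'(x)}\ge\frac{\lambda\omega_{e_0}}{\lambda m(x)}=\frac{\omega_{e_0}}{m(x)},\qquad \frac{\omega'_{e_0}}{m'(y)}\ge\frac{\omega_{e_0}}{m(y)}.
\end{equation*}
The curvature identity at $e_0$ says the sums of these two left-hand sides and of the two right-hand sides agree, so both inequalities are equalities. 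Therefore $m'(x)=\lambda m(x)$ and $m'(y)=\lambda m(y)$.

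Since each term satisfies $\omega'_z\le\lambda\omega_z$, equality of the sums forces $r_z=\lambda$ for every edge $z$ incident to $x$ or to $y$. So the set of edges attaining the maximum ratio is closed under passing to adjacent edges. The graph is connected (a standing assumption of the paper), so its edges are linked through shared vertices, and propagation gives $r_e=\lambda$ for all $e\in E$. That is, $\bm\omega'=\lambda\bm\omega$.

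The only delicate step is the equality case that propagates the maximum. It relies on each summand $\omega_e/m(x)$ being monotone in the right direction under the bound $m'(x)\le\lambda m(x)$. This holds because the girth assumption makes $\kappa_e$ given exactly by \eqref{cur_tree}, with no transport-plan contributions from short cycles. The degenerate case $K_2$ is trivial, since $U$ is then a single point.
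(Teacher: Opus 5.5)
Your proof is correct and is essentially the paper's argument. The paper takes the minimum of $\rho_e=\omega_e/\omega'_e$, which is the reciprocal of your maximal ratio $\omega'_e/\omega_e$. It uses $m(u)\ge\rho_{\min}m'(u)$ to force equality in the curvature identity at the extremal edge, propagates the extremal ratio to adjacent edges and then by connectivity to all edges, and fixes the constant using the normalization on $U$.
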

\begin{proof}
Assume that $\kappa_e(\bm\omega)=\kappa_e(\bm \omega')$, which implies
\[\omega_e\left(\frac{1}{\sum_{u\sim x} \omega_{xu}}+\frac{1}{\sum_{v\sim y} \omega_{yv}}\right)=\omega_e'\left(\frac{1}{\sum_{u\sim x} \omega_{xu}'}+\frac{1}{\sum_{v\sim y} \omega_{yv}'}\right),\]
where $\sum_{e\in E}\omega_e=\sum_{e\in E}\omega_e'=1$. Next, we have to prove that $\omega_e=\omega'_e$ for any $e\in E$. Set 
\[\rho_e=\frac{\omega_e}{\omega_e'}.\]
If there exists $h\in E$ such that $\omega_h\neq\omega_h'$. without loss of generality, we assume that $\rho_{\min}:=\min_{i}\rho_i<1$, and $h=(x_h,y_h)$ is the minimized edge. Thus
\begin{equation}\label{eq1}
\rho_{\min}\left(\frac{1}{\sum_{u\sim x_h} \omega_{x_hu}}+\frac{1}{\sum_{v\sim y_h} \omega_{y_hv}}\right)=\frac{1}{\sum_{u\sim x_h} \omega_{x_hu}'}+\frac{1}{\sum_{v\sim y_h} \omega_{y_hv}'}.
\end{equation}
On the other hand, for any $u\in V$, we have
\[\sum_{v\sim u} \omega_{uv}=\sum_{v\sim u}  \rho_{uv} \omega_{uv}'\geq \rho_{\min}\sum_{v\sim u} \omega'_{uv}.\]
And if there exists $v\in N(u)$ such that $\rho_{uv}>\rho_{\min}$, then
\[\sum_{v\sim u} \omega_{uv}>\rho_{\min}\sum_{v\sim u} \omega'_{uv}.\]
Thus, if there exists $v\in N(x_h)\cup N(y_h)$ such that $\rho_{uv}>\rho_{\min}$, substituting the above inequalities into \eqref{eq1}, we get a contradiction. Based on the connectivity of the graph, we can conclude that $\rho_e\equiv \rho$ with $\rho>0$, which implies that 
\[\omega_e=\rho\omega'_e\]
for any $e\in E.$ By
\[\sum_{e\in E}\omega_e = \sum_{e\in E}\omega'_e=1,\]
we have $\rho=1$. This completes the proof.
\end{proof}
Next, we explore the sufficient and necessary condition for the existence of positive weights for constant curvature, namely, to find the condition that assures there is a set of weights such that
\begin{equation}\label{constant}
    \omega_e\left(\frac{1}{\sum_{u\sim x} \omega_{xu}}+\frac{1}{\sum_{v\sim y} \omega_{yv}}\right)=\frac{|V|}{|E|}
\end{equation}
hold for some \(\omega_e > 0\) for any $e\in E$.
 
\begin{proposition}\label{regular}
There exist constant weights on $E$ satisfying $\kappa_e=\bar{\kappa}$ for any $e\in E$  if and only if the graph is a regular graph or a semi-regular bipartite graph.    
\end{proposition}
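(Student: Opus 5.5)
With constant weights $\omega_e\equiv c>0$ we have $m(x)=c\,d(x)$, where $d(x)$ is the degree of $x$. By \eqref{cur_tree} the curvature of $e=(x,y)$ becomes
\[
\kappa_e=2\left(\frac{1}{d(x)}+\frac{1}{d(y)}\right)-2 .
\]
So the weights drop out entirely, and the condition $\kappa_e=\bar\kappa$ for all $e$ is equivalent to \eqref{constant}, which here reads
\begin{equation*}
\frac{1}{d(x)}+\frac{1}{d(y)}=\frac{|V|}{|E|}\qquad\text{for every edge } xy\in E.
\end{equation*}
The plan is to characterize the graphs satisfying this purely combinatorial identity.

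For sufficiency, first take $G$ $k$-regular. Then $|E|=k|V|/2$, and $\frac1k+\frac1k=\frac{2}{k}=\frac{|V|}{|E|}$. Next take $G$ semi-regular bipartite with parts $V_1,V_2$ and degrees $k_1,k_2$. Counting edges from each side gives $|E|=k_1|V_1|=k_2|V_2|$, hence
\[
\frac{|V|}{|E|}=\frac{|V_1|}{|E|}+\frac{|V_2|}{|E|}=\frac{1}{k_1}+\frac{1}{k_2}.
\]
Every edge joins a degree-$k_1$ vertex to a degree-$k_2$ vertex, so the identity holds. (Alternatively, Proposition \ref{pro-curvature} already forces the common value to be $\bar\kappa$ once all curvatures are equal.)

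For necessity, suppose $\frac1{d(x)}+\frac1{d(y)}=C$ on every edge. Fix a vertex $x$ with degree $a$. Every neighbor $y$ of $x$ satisfies $\frac{1}{d(y)}=C-\frac1a$, so all neighbors share a common degree $b$. Applying the same argument at each such neighbor, all of its neighbors have degree $a$. By connectivity, the degrees alternate between the two values $a$ and $b$ along every walk.

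If $a=b$, the graph is regular. If $a\neq b$, let $V_a$ and $V_b$ be the sets of vertices of degree $a$ and $b$. Every edge joins $V_a$ to $V_b$, so no edge lies inside either class. Hence $G$ is bipartite with these parts and semi-regular.

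The main subtlety is in this last step: one must check that the degree classes really give a bipartition. They do, because the edge equation forbids an edge between two degree-$a$ vertices whenever $a\neq b$. The remaining calculations are routine.
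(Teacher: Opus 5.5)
Your proposal is correct and follows essentially the same route as the paper. Both arguments reduce the problem to the degree identity $\frac{1}{d(x)}+\frac{1}{d(y)}=\frac{|V|}{|E|}$, verify it by edge counting in the regular and semi-regular bipartite cases, and prove necessity from the alternation of degrees along edges together with connectivity. Your explicit check that $a\neq b$ rules out edges inside a degree class is a small but welcome tightening of the paper's bipartition step.
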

\begin{proof}
\textbf{Necessity:} From \eqref{constant}, for any $v\sim u$, $$\frac{1}{d(v)} = \frac{|V|}{|E|}- \frac{1}{d(u)}.$$
For any $w\sim v, w\neq v$, we can conclude that $d(w)=d(u)$. Thus, along any path in the graph, the degrees of the nodes can only alternate between two values. Set $a$ and $b$, respectively.

If $a = b$, this is a regular graph. If $a \neq b$, the nodes are partitioned into two sets. Set $V_a$ with degree $a$ and $V_b$ with degree $b$, respectively. Nodes in $V_a$ can only connect to nodes in $V_b$  and nodes in $V_b$ can only connect to nodes in $V_a$. This is a semi-regular bipartite graph.

\textbf{Sufficiency:} For a $k$-regular graph, the relationship $2|E| = k|V|$ implies 
$$\frac{|V|}{|E|} = \frac{2}{k}.$$
For a $(a, b)$-semi-regular bipartite graph, using  $|E| = |V_a|a = |V_b|b$, we have 
$$\frac{|V|}{|E|} = \frac{|V_a| + |V_b|}{|E|}=\frac{\frac{|E|}{a} + \frac{|E|}{b}}{|E|} = \frac{1}{a} + \frac{1}{b}.$$
In these two cases, Equations \eqref{constant} hold for constant weights. Combining with Proposition \ref{injective}, we can conclude that constant weights are the unique solutions to \eqref{constant} up to a scalar multiplication.
\end{proof}

\begin{theorem} \label{condition_th} 
There exist positive weights on $E$ satisfying $\kappa_e=\bar{\kappa}$ for any $e\in E$ if and only if
\begin{equation}\label{ns-condtion}
\max_{\emptyset\neq\Omega \subsetneqq V}\frac{|E(\Omega)|}{|\Omega|} < \frac{|E|}{|V|}.
\end{equation}
\end{theorem}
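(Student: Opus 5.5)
The plan is to replace the unknown weights by vertex "potentials" so that \eqref{constant} becomes the critical point equation of a convex functional, and to show that \eqref{ns-condtion} is exactly the coercivity condition for that functional. Throughout put $c:=|V|/|E|$ and, for an edge $e=(x,y)$, $p_{e,x}:=\omega_e/m(x)$. Then $\sum_{e\ni x}p_{e,x}=1$ for every $x$, and \eqref{constant} reads $p_{e,x}+p_{e,y}=c$.

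\textbf{Necessity.} Let positive weights satisfy \eqref{constant}, and let $\emptyset\neq\Omega\subsetneqq V$. Sum the identities $\sum_{e\ni x}p_{e,x}=1$ over $x\in\Omega$:
\[
|\Omega|=\sum_{x\in\Omega}\sum_{e\ni x}p_{e,x}=c\,|E(\Omega)|+\sum_{e=(x,y)\in E(\Omega,\Omega^c),\,x\in\Omega}p_{e,x}.
\]
The first term comes from the inner edges, each of which contributes $p_{e,x}+p_{e,y}=c$. The graph is connected, so $E(\Omega,\Omega^c)\neq\emptyset$, and every $p_{e,x}>0$. Hence $|\Omega|>c|E(\Omega)|$, which is \eqref{ns-condtion}.

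\textbf{Sufficiency.} Write $m(x)=e^{u_x}$ with $u\in\mathbb{R}^V$. Given $u$, \eqref{constant} forces
\[
\omega_e=\frac{c}{e^{-u_x}+e^{-u_y}},\qquad p_{e,x}=\frac{c\,e^{-u_x}}{e^{-u_x}+e^{-u_y}}.
\]
The only remaining requirement is consistency, namely $\sum_{e\ni x}\omega_e=e^{u_x}$, or equivalently $\sum_{e\ni x}p_{e,x}=1$ for every $x$. This is precisely $\nabla\Phi(u)=0$ for
\[
\Phi(u)=\sum_{e=(x,y)\in E}c\log\bigl(e^{-u_x}+e^{-u_y}\bigr)+\sum_{x\in V}u_x .
\]
A critical point then yields positive weights solving \eqref{constant}, and hence $\kappa_e=\bar\kappa$ by \eqref{cur_tree}.

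The function $\Phi$ is convex, being a sum of log-sum-exp terms plus a linear term. Since $c|E|=|V|$, it is invariant under $u\mapsto u+t\mathbf{1}$. It therefore suffices to show that $\Phi$ is coercive on the hyperplane $\{\sum_x u_x=0\}$; a minimizer then exists and is a critical point.

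\textbf{Main step: coercivity.} By convexity, it is enough to show that the recession function is strictly positive in every nonconstant direction $v$ with $\sum v_x=0$. That recession function is
\[
\Phi_\infty(v)=\lim_{s\to\infty}\frac{\Phi(sv)}{s}=-c\sum_{e=(x,y)}\min(v_x,v_y)+\sum_x v_x .
\]
I will use the layer-cake formula. Let $a=\min v$ and $\Omega_s=\{v>s\}$. Then
\[
\sum_x v_x=a|V|+\int_a^\infty|\Omega_s|\,ds,\qquad \sum_e\min(v_x,v_y)=a|E|+\int_a^\infty|E(\Omega_s)|\,ds .
\]
The $a$-terms cancel because $c|E|=|V|$, leaving
\[
\Phi_\infty(v)=\int_a^{\max v}\bigl(|\Omega_s|-c|E(\Omega_s)|\bigr)\,ds .
\]
For $s\in[a,\max v)$ the set $\Omega_s$ is a nonempty proper subset of $V$, so by \eqref{ns-condtion} the integrand is positive there. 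Since $v$ is nonconstant, this interval has positive length, so $\Phi_\infty(v)>0$.

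By compactness of the unit sphere in the hyperplane and lower semicontinuity of $\Phi_\infty$, this gives a uniform bound $\Phi_\infty\ge\varepsilon>0$ on that sphere. That bound yields coercivity, hence a minimizer and a solution. Uniqueness up to scaling is already given by Proposition \ref{injective}, and is not needed here.
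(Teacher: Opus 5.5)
Your proposal is correct and is essentially the paper's proof. The necessity computation is identical. For sufficiency, your $\Phi$ is exactly $\frac{|V|}{|E|}H$: expand $\log(e^{-u_x}+e^{-u_y})=-u_x+\psi(u_x-u_y)$ and symmetrize. Likewise your recession function is $\frac{|V|}{|E|}\rho$, and your layer-cake step is the paper's co-area reduction to level sets. The differences are cosmetic: the log-sum-exp form gives convexity without a Hessian, translation invariance replaces the Lagrange-multiplier argument, and the last coercivity step is immediate from $\Phi\ge\Phi_\infty$ pointwise.
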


\begin{proof}
\textbf{Necessity:} For any $\Omega \subsetneqq V$, 
\begin{align*}
    |\Omega|=\sum_{u \in \Omega} 1 &= \sum_{u \in \Omega} \left( \sum_{v \sim u} \frac{\omega_{uv}}{m(u)} \right)\\
    &=\sum_{u \in \Omega} \sum_{v \in \Omega, v \sim u} \frac{\omega_{uv}}{m(u)} + \sum_{u \in \Omega} \sum_{v \notin \Omega, v \sim u} \frac{\omega_{uv}}{m(u)}\\
    &=\sum_{(u,v)\in E(\Omega)} \left(\frac{\omega_{uv}}{m(u)} +\frac{\omega_{vu}}{m(v)}\right)+\sum_{u \in \Omega} \sum_{v \notin \Omega, v \sim u} \frac{\omega_{uv}}{m(u)}
\end{align*}
where the summation $\sum_{(x,y)\in E(\Omega)}$ is taken over each edge in $E(\Omega)$ exactly once. 
Due to \eqref{constant}, we have
\[\sum_{(u,v)\in E(\Omega)} \left(\frac{\omega_{uv}}{m(u)} +\frac{\omega_{vu}}{m(v)}\right)=\frac{|V|}{|E|}|E(\Omega)|.\]
Moreover,
\[\sum_{u \in \Omega} \sum_{v \notin \Omega, v \sim u} \frac{\omega_{uv}}{m(u)}>0\]
by the connectivity of $G$.
Thus, substituting these two results yields \eqref{ns-condtion}.

\textbf{Sufficiency:}
Firstly, we transform \eqref{constant} to the equivalent system: there exists \(m(x) > 0\) such that
    \begin{equation}\label{equiv}
        \sum_{y \sim x} \frac{m(y)}{m(x) + m(y)} = \frac{|E|}{|V|}, \quad \forall x \in V
    \end{equation}
related to the set of vertices by constructing weights
\[
\omega(x,y) = \frac{|V|}{|E|} \cdot \frac{m(x)m(y)}{m(x) + m(y)}.
\]
Set $g(x)=\ln m(x)$ for any $x\in V.$ Then the equivalent system \eqref{equiv} can be regrouped as 
\begin{equation}\label{new}
    \sum_{y \sim x} \frac{1}{1 + e^{g(x) - g(y)}} =  \frac{|E|}{|V|},\quad \forall x \in V.
\end{equation}
Define the function \(H: \mathbb{R}^{V} \to \mathbb{R}\)
$$H(\bm{g}) = \frac{|E|}{|V|} \sum_{z \in V} g(z) + \frac{1}{2} \sum_{u \in V} \sum_{v \sim u} \psi(g(v) - g(u))- \frac{1}{2} \sum_{z \in V} d(z) g(z), \quad \psi(t) = \ln(1 + e^t).$$
Computing the gradient of \( H(\bm{g}) \) yields 
\begin{align}\label{gradient}
    \frac{1}{2} \frac{\partial}{\partial g(x)} \left( \sum_{u \in V} \sum_{v \sim u} \ln(1 + e^{g(v) - g(u)}) \right) &=\frac{1}{2} \left[ \sum_{y \sim x} \frac{e^{g(x)-g(y)}}{1 + e^{g(x)-g(y)}} - \sum_{y \sim x} \frac{e^{g(y)-g(x)}}{1 + e^{g(y)-g(x)}}  \right]\notag\\
    &= \frac{d(x)}{2}-\sum_{y \sim x} \frac{1}{1 + e^{g(x) - g(y)}}
\end{align}
by 
    \[
    \frac{1}{1 + e^{t}} + \frac{1}{1 + e^{-t}} = 1
    \]
for any $t\in \mathbb{R}$.
Therefore, \eqref{new} are the Euler-Lagrange equations of the functional \( H(\bm{g}) \). 

Notice  that if \( \bm{g} \) is a solution to the system \eqref{new}, so \( \bm{g} + c\bm{1} \) is also a solution for any \( c \in \mathbb{R} \). To eliminate this redundancy, we restrict solutions to the subspace
\[\mathcal{S}:=\{\bm{g}\in \mathbb{R}^V : \sum_{x\in V} g(x) = 0\}.\]
Actually, if $\bm g,\bm g+c\bm 1\in \mathcal{S}$, we can find $c=0.$ 
When minimizing $H(\boldsymbol{g})$ subject to $\sum_{x \in V} g(x) = 0$,
 the Lagrangian is
    \[
    \mathcal{L} = H(\boldsymbol{g}) + \lambda \sum_{x \in V} g(x).
    \]
then 
    \[
    \frac{\partial \mathcal{L}}{\partial g(x)} = \frac{\partial H}{\partial g(x)} + \lambda = 0 \quad \forall x \in V, \qquad \sum_{x \in V} g(x) = 0.
    \]
Moreover,  
\[
  \sum_{x \in V} \frac{\partial H}{\partial g(x)} = |V| \cdot \frac{|E|}{|V|} - \sum_{x \in V} \sum_{y \sim x} \frac{1}{1 + e^{g(x) - g(y)}}= 0
  \]
by
  \[\sum_{x \in V} \sum_{y \sim x} \frac{1}{1 + e^{g(x) - g(y)}}=\sum_{(x,y)\in E} \left(\frac{1}{1 + e^{g(x) - g(y)}}+\frac{1}{1 + e^{g(y) - g(x)}}\right)=|E|.\]
Then, 
summing all components of  the Lagrangian gives
    \[
    0= \sum_{x \in V} \left( \frac{\partial H}{\partial g(x)} + \lambda \right) = 0 + |V| \lambda
    \]
which implies $ \lambda = 0.$ Thus, the conditions simplify to $\nabla H(\boldsymbol{g}) = \bm{0}$ and $\sum_{x \in V} g(x) = 0$.  
    This means that solutions to \eqref{new} correspond exactly to the critical points of $H$ on $\mathcal{S}$. In the following, we shall demonstrate that the restriction of $H$ to $\mathcal{S}$ attains an extreme value. The proof is divided into two steps.

\noindent \textbf{Step 1. $H(\bm{g})$ is strictly convex on $\mathcal{S}$.}\\ From \eqref{gradient}, for any $x\in V$,
$$\frac{\partial^2 H}{\partial g(x)^2} = \sum_{y \sim x} \frac{e^{g(x)-g(y)}}{(1+e^{g(x)-g(y)})^2},$$
 for any $y\sim x$,
$$\frac{\partial^2 H}{\partial g(x) \partial g(y)} = -\frac{e^{g(x)-g(y)}}{(1+e^{g(x)-g(y)})^2}=\frac{\partial^2 H}{\partial g(y) \partial g(x)} $$
and it is $0$ otherwise. Thus,
$$\bm{v}^T (\nabla^2 H) \bm{v} = \sum_{(x,y)\in E} \frac{e^{g(x)-g(y)}}{(1+e^{g(x)-g(y)})^2} (v_x - v_y)^2>0$$
for any $\bm{v}\in \mathcal{S}$ due to $\mathcal{S}\perp\bm 1$.

\noindent\textbf{Step 2. Assume that the condition \eqref{ns-condtion} holds for all \(\Omega \subsetneqq V\). Then, \(H\) is coercive on \(\mathcal{S}\), i.e.,
\(H\rightarrow +\infty ~\mbox{when}~ \|\bm g\|_2\rightarrow +\infty ~\mbox{on}~ \mathcal{S}. \)}\\
 First, we need a classical theorem from Convex Analysis: for a continuous convex function, coercivity of $H$ on $\mathcal{S}$ is equivalent to its asymptotic slope 
 $$\rho(\bm{v}) :=  \lim_{t \to +\infty} \frac{H(t\bm{v}) - H(\bm 0)}{t}$$ being strictly positive for all non-zero directions $\bm{v} \in \mathcal{S} \setminus \{\bm 0\}$ (see, for example, Proposition 3.2.4 and Definition 3.2.5 in \cite{hiriart2004fundamentals}). For any $z\in \mathbb{R}$, using the fact that 
$$\lim_{t \to +\infty} \frac{\ln(1+e^{tz})}{t} = \max(0, z),$$
the asymptotic slope is transformed to 
$$\rho(\bm{v}) =\frac{1}{2} \sum_{x \in V} \sum_{y \sim x}  \max(0, v(y) - v(x)) + \sum_{x \in V} \alpha(x) v(x) = \frac{1}{2} \sum_{(x,y) \in E} |v(x) - v(y)| + \sum_{x \in V} \alpha(x) v(x),$$
where $\alpha(x) = \frac{|E|}{|V|}- \frac{1}{2}d(x)$. 
Let $\bm1_\Omega$ be the indicator function of $\Omega$.
To prove that $\rho(\bm{v}) > 0$ holds for all $\bm{v} \in \mathcal{S}$, we {\it claim} that it suffices to verify that the slope $\rho(\bm1_\Omega)$ is positive for all non-empty proper subsets $\emptyset\neq\Omega \subsetneqq V$, namely,
\begin{equation}\label{key}
    \rho(\bm1_\Omega) = \frac{1}{2}|E(\Omega, \Omega^c)| + \sum_{x \in \Omega} \alpha(x) > 0.
\end{equation}
Next, we prove \eqref{key}. We utilize the fundamental graph identity
\begin{equation}\label{deg}
    \sum_{x \in \Omega} d(x) = 2|E(\Omega)| + |E(\Omega, \Omega^c)|.
\end{equation}
Substituting this yields
$$\rho(\bm1_\Omega)  =  \frac{1}{2}|E(\Omega, \Omega^c)| + \sum_{x \in \Omega} \left(  \frac{|E|}{|V|}- \frac{1}{2}d(x) \right) = |\Omega|\frac{|E|}{|V|} - |E(\Omega)|,$$
which is positive for all $\Omega\subsetneqq V$ due to the condition \eqref{ns-condtion}.

It remains to prove the above claim. Let ${\Omega_t}=\{x\in V:v(x) \ge t\}$ and $v_{\min}=\min_{x\in V}v(x)$.
Notice that  $\bm{1}_{\Omega_t}\equiv1$ for any $t<v_{\min}$, 
\[\sum_{x \in V} \alpha(x) \bm{1}_{\Omega_t}(x)=\sum_{x \in V} \alpha(x)=\sum_{x \in V}\left(\frac{|E|}{|V|}- \frac{1}{2}d(x)\right)=0.\]
Therefore,
\[\int_{-\infty}^{+\infty}\sum_{x \in V} \alpha(x) \bm{1}_{\Omega_t}(x)\, dt=\sum_{x \in V} \alpha(x) \int_{v_{\min}}^{+\infty}\bm{1}_{\Omega_t}(x)\, dt =\sum_{x \in V} \alpha(x)(v(x)-v_{\min})=\sum_{x \in V} \alpha(x)v(x).\]
It follows that
\[
    \int_{-\infty}^{+\infty}\rho(\bm{1}_{\Omega_t})\, dt = \frac{1}{2}\int_{-\infty}^{+\infty} \sum_{(x,y) \in E} |\bm{1}_{\Omega_t}(x) - \bm{1}_{\Omega_t}(y)| \, dt+\int_{-\infty}^{+\infty} \sum_{x \in V} \alpha(x) \bm{1}_{\Omega_t}(x)\, dt=\rho(\bm v)
\]
by the following co-area formula 
$$ \int_{-\infty}^{+\infty} \left( \sum_{(x,y) \in E} |\bm{1}_{\Omega_t}(x) - \bm{1}_{\Omega_t}(y)| \right) \, dt=\sum_{(x,y) \in E} |v(x) - v(y)|, $$
see Lemma 3.3 in \cite{grigor2018introduction} for details. This completes the proof of the claim.  Thus, $H$ is coercive. 

According to the above results, \(H\) attains a minimum \(\bm{g}^* \in S\) with \(\nabla H(\bm{g}^*) = 0\), solving
\[
\sum_{y \sim x} \frac{1}{1 + e^{g^*(x) - g^*(y)}} = \frac{|E|}{|V|}, ~\mbox{for any}~ x\in V.
\]
Then \(m^*(x) = e^{g^*(x)}\) satisfies the equivalent system \eqref{equiv}, and \[\omega^*(x,y)= \frac{|V|}{|E|}\cdot\frac{m^*(x) m^*(y)}{m^*(x) + m^*(y)}\] solves the original equations \eqref{constant}.
\end{proof}

\begin{remark}\label{constant-cur}
Defining graph density as half of the average degree, the condition \eqref{ns-condtion} implies that the global density strictly exceeds the density of any local subgraph. Put simply, there are no clusters within the graph that are denser than the graph itself.
Here, we provide several examples to illustrate the existence and non-existence of weight for constant curvature.
\begin{enumerate}
\item Any tree $T$ admits a weight of constant curvature. Indeed, for any non-empty $\Omega\subsetneqq V$, 
$$\frac{|E(\Omega)|}{|\Omega|} \leq 1 - \frac{1}{|\Omega|} < 1 - \frac{1}{|V|} = \frac{|E|}{|V|},$$
 which implies that the condition \eqref{ns-condtion} always holds. 

    \item If a graph is not regular or semi-regular bipartite, a constant curvature weight may or may not exist. For example:
    \begin{itemize}
    \item Dumbbell graph $D_{6,6}$, see Figure (a), admits a weight of constant curvature $-\frac{2}{13}$. Since any proper subset $\Omega$ contains at most one complete cycle, $|E(\Omega)| = |\Omega|$ if it contains a cycle or $|E(\Omega)| < |\Omega|$ if it contains no cycle. Therefore, $$\max_{\emptyset\neq\Omega \subsetneqq V}\frac{|E(\Omega)|}{|\Omega|} = 1<\frac{13}{12}= \frac{|E|}{|V|}.$$
    \item Tadpole graph  $T_{6,1}$, see Figure (b),
     does not have a weight for constant curvature. Let $\Omega$ be exactly the $6$-cycle. It follows that
     $$\frac{|E(\Omega)|}{|\Omega|} = 1=\dfrac{|E|}{|V|}.$$
     \item  Heawood-Hexagon dumbbell graph, see Figure (c), does not have a weight for constant curvature. 
     Take $\Omega$ as the vertex set of the Heawood graph component, we have
     $$\frac{|E(\Omega)|}{|\Omega|} = \frac{21}{14} = 1.5>1.4= \frac{28}{20} = \frac{|E|}{|V|}. $$
    \end{itemize}
\end{enumerate}

\tikzset{
    vertex/.style={circle, fill=black, inner sep=0pt, minimum size=5pt},
    edge/.style={thick}
}

\begin{figure}[H]
    \centering
    
    \begin{subfigure}[b]{0.32\textwidth}
        \centering
        \begin{tikzpicture}[scale=0.5]
            \foreach \i in {0,60,...,300} {
                \node[vertex] (L\i) at (\i:1.5) {};
            }
            \draw[edge] (L0) -- (L60) -- (L120) -- (L180) -- (L240) -- (L300) -- (L0);
            
            \begin{scope}[shift={(4.5,0)}]
                \foreach \i in {0,60,...,300} {
                    \node[vertex] (R\i) at (\i:1.5) {};
                }
                \draw[edge] (R0) -- (R60) -- (R120) -- (R180) -- (R240) -- (R300) -- (R0);
            \end{scope}
            
            \draw[edge] (L0) -- (R180);
        \end{tikzpicture}
        \caption{Dumbbell graph $D_{6,6}$}
    \end{subfigure}
    \hfill
    \begin{subfigure}[b]{0.32\textwidth}
        \centering
        \begin{tikzpicture}[scale=0.5]
            \foreach \i in {0,60,...,300} {
                \node[vertex] (H\i) at (\i:1.5) {};
            }
            \draw[edge] (H0) -- (H60) -- (H120) -- (H180) -- (H240) -- (H300) -- (H0);
            
            \node[vertex] (Tail) at (3.0, 0) {};
            \draw[edge] (H0) -- (Tail);
            
            \path (0,-1.8) -- (0,1.8); 
        \end{tikzpicture}
        \caption{Tadpole graph $T_{6,1}$}
    \end{subfigure}
    \hfill
    \begin{subfigure}[b]{0.32\textwidth}
        \centering
        \begin{tikzpicture}[scale=0.4]
            \foreach \i in {0,...,13} {
                \node[vertex] (HW\i) at ({90 - \i*360/14}:2.5) {};
            }
            
            \foreach \i [evaluate=\i as \next using {int(mod(\i+1,14))}] in {0,...,13} {
                \draw[edge] (HW\i) -- (HW\next);
            }
            
            \foreach \i in {0, 2, ..., 12} {
                \pgfmathsetmacro{\target}{int(mod(\i+5,14))}
                \draw[edge] (HW\i) -- (HW\target);
            }

            \begin{scope}[shift={(6.0,0)}]
                \foreach \i in {0,60,...,300} {
                    \node[vertex] (Hex\i) at (\i:1.5) {};
                }
                \draw[edge] (Hex0) -- (Hex60) -- (Hex120) -- (Hex180) -- (Hex240) -- (Hex300) -- (Hex0);
            \end{scope}

            \draw[edge] (HW3) -- (Hex180); 
            
        \end{tikzpicture}
        \caption{Heawood-Hexagon dumbbell}
    \end{subfigure}
\end{figure}
\end{remark}

\begin{lemma}[Theorem 1.1 in \cite{luo2004combinatorial}]
Fix a triangulation $T$ of a closed topological surface $M$. There exists a constant discrete Gaussian curvature corresponding to PL metric associated to $T$ if and only if for any proper subset $I$ of the vertices $V$ of $T$,
\begin{equation}\label{ns_luo}
\frac{|F_I|}{|I|} > \frac{|F|}{|V|} 
\end{equation}
where $F$ is the set of all triangles in $T$ and $F_I$ is the set of all triangles having a vertex in $I$.
\end{lemma}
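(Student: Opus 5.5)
This lemma is quoted verbatim from Luo's paper (Theorem 1.1 in \cite{luo2004combinatorial}), so within this paper it is imported rather than re-proved. If I were to prove it, I would mirror the variational strategy used for Theorem \ref{condition_th}, with PL conformal factors replacing the vertex potentials $g$.

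\textbf{Setup.} Parametrize PL metrics in a conformal class by $u\in\mathbb{R}^V$, with edge lengths $l_{ij}=e^{(u_i+u_j)/2}l^0_{ij}$. Write $K_i$ for the angle deficit at vertex $i$. By Gauss--Bonnet,
\[\sum_i K_i=2\pi\chi(M)=2\pi|V|-\pi|F|,\]
so a constant curvature $K_i\equiv\bar K$ must equal $2\pi-\pi|F|/|V|$.

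\textbf{Variational principle.} The key fact is that the Jacobian $\partial K_i/\partial u_j$ is symmetric. This makes the $1$-form $\sum_i (K_i-\bar K)\,du_i$ closed, so it has a potential $\mathcal{F}(u)$, which is invariant under $u\mapsto u+c\mathbf{1}$. Critical points of $\mathcal{F}$ on $\{\sum u_i=0\}$ are exactly the constant curvature metrics, by the same Lagrange multiplier argument as in Theorem \ref{condition_th}. On the region where every triangle inequality holds, the Jacobian has the sign structure of a cotangent Laplacian. This gives local convexity and hence uniqueness.

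\textbf{Necessity.} Take $I\subsetneq V$ and sum the angle sums over $I$. Each triangle with a vertex in $I$ contributes less than $\pi$ from its corners lying in $I$, unless all three of its vertices are in $I$, which is impossible to arrange for every triangle in $F_I$ since $I$ is proper and $M$ is connected. This yields
\[\sum_{i\in I}(2\pi-K_i)<\pi|F_I|,\]
which becomes \eqref{ns_luo} after substituting $K_i=\bar K$.

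\textbf{Sufficiency (main obstacle).} Here one needs coercivity of $\mathcal{F}$, but $\mathcal{F}$ is only defined on a non-convex domain: degenerate triangles appear as $u$ moves. I would first try to extend the angles continuously to degenerate triangles, assigning angles $(\pi,0,0)$, so that $\mathcal{F}$ becomes a $C^1$ convex function on all of $\mathbb{R}^V$. Then I would compute asymptotic slopes along directions $\mathbf 1_I$. As $u_I\to-\infty$, triangles meeting $I$ degenerate with angle $\pi$ at the small vertices, and the slope along this direction is a positive multiple of
\[\pi|F_I|-\pi|I|\,\frac{|F|}{|V|}.\]
Combining this with a co-area argument, as in Step 2 of Theorem \ref{condition_th}, gives a minimizer.

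The genuinely hard part is ensuring that this minimizer lies in the nondegenerate region. Luo in fact obtains the result through a flow with surgery (combinatorial Yamabe flow), so I would cite Theorem 1.1 of \cite{luo2004combinatorial} for this point rather than reproduce it.
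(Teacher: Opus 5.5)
Your proposal matches the paper, which gives no proof of this lemma and simply imports Theorem 1.1 of \cite{luo2004combinatorial}. Like the paper, you rest the sufficiency direction on that citation, and your self-contained necessity argument is correct: the corners in $I$ of each triangle of $F_I$ sum to at most $\pi$, strictly for at least one triangle by connectedness. One caveat on a side remark: local convexity on the non-convex domain of nondegenerate conformal factors gives only local uniqueness, and global uniqueness needs the convex extension you introduce afterwards---but the lemma as stated does not depend on uniqueness.
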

Next, we clarify the connection between Luo’s  condition \eqref{ns_luo} for the existence of PL metric corresponding to constant discrete Gaussian curvature and our condition \eqref{ns-condtion} for the existence of weights corresponding to constant Lin-Lu-Yau curvature. To facilitate a comparison between the two, we verify \eqref{ns_luo} on the Delaunay triangulation and  \eqref{ns-condtion} on its Voronoi diagram, and arrive at the following result.
\begin{corollary}\label{coro_sur}
     Condition \eqref{ns-condtion} is  less restrictive than Condition \eqref{ns_luo} in the dual sense. To be more precise, for any Delaunay triangulation of a surface,  \eqref{ns-condtion} is always satisfied on the dual Voronoi diagram with vertex degrees greater than 5, whereas \eqref{ns_luo} does not necessarily hold on the Delaunay triangulation itself. Notably, both conditions are satisfied in the case of 6-regular Delaunay triangulations and their Voronoi diagrams.
\end{corollary}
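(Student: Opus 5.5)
Let $T$ be a triangulation of a closed surface $M$ in which every vertex has degree at least $6$, and let $G=(V^*,E^*)$ be its dual Voronoi graph. The vertices of $G$ are the triangles $F$ of $T$, the edges of $G$ correspond to the edges $E$ of $T$, and the faces of $G$ correspond to the vertices $V$ of $T$. The plan has three parts: check that $G$ lies in the setting of Section \ref{section3}, verify \eqref{ns-condtion} on $G$ by a direct counting argument, and finally exhibit a triangulation violating \eqref{ns_luo} and treat the $6$-regular case.

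\textbf{Setting of $G$.} $G$ is $3$-regular, with $|V^*|=|F|$ and $|E^*|=|E|=\tfrac32|F|$. Each face of $G$ dual to a vertex $v$ of $T$ is a cycle of length $\deg v\ge 6$. I will argue that the girth of $G$ is at least $6$, using that a short cycle in $G$ would bound a disk made up of fewer than $6$ faces, each of length at least $6$. This would follow from a combinatorial Gauss--Bonnet count, for which I expect to need the mild assumption that $T$ is a simplicial (Delaunay) triangulation. Granting this, Theorem \ref{condition_th} applies to $G$. Since $G$ is $3$-regular, $|E^*|/|V^*|=3/2$.

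\textbf{Condition \eqref{ns-condtion} on $G$.} For $\emptyset\neq\Omega\subsetneqq V^*$, summing degrees gives
\[
3|\Omega|=2|E(\Omega)|+|E(\Omega,\Omega^c)|,
\qquad\text{hence}\qquad
\frac{|E(\Omega)|}{|\Omega|}=\frac32-\frac{|E(\Omega,\Omega^c)|}{2|\Omega|}.
\]
Connectivity of $G$ gives $|E(\Omega,\Omega^c)|\ge1$, so $|E(\Omega)|/|\Omega|<3/2=|E^*|/|V^*|$, which is exactly \eqref{ns-condtion}. This counting argument is the same as in Proposition \ref{regular}; in fact that proposition already gives constant weights on $G$, since $G$ is regular. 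The degree condition ``$>5$'' enters only through the girth requirement.

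\textbf{Failure of \eqref{ns_luo} and the $6$-regular case.} Since $3|F|=2|E|$, the right-hand side of \eqref{ns_luo} is $|F|/|V|=2|E|/(3|V|)$. To show \eqref{ns_luo} can fail, I will construct a triangulation $T$ with all degrees at least $6$ containing a large vertex set $I$ whose triangles are few. The main obstacle is producing this counterexample explicitly. My first attempt will take $I=V\setminus\{v_0\}$ for a single vertex $v_0$ of very large degree, on a surface of high genus. Then
\[
|F_I|=|F|,
\qquad
\frac{|F_I|}{|I|}=\frac{|F|}{|V|-1}>\frac{|F|}{|V|},
\]
so this choice of $I$ satisfies \eqref{ns_luo} and does not give a violation. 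I will instead choose $I$ to be the complement of a densely clustered region: a patch of vertices whose incident triangles carry a disproportionate share of $F$. Concretely, I will glue a high-genus, high-degree triangulated handle body, which has many triangles per vertex, to a sparse $6$-regular toroidal region. Taking $I$ to be the sparse part then gives
\[
\frac{|F_I|}{|I|}\approx 2<\frac{|F|}{|V|},
\]
because the dense part raises the global ratio $|F|/|V|$. This violates \eqref{ns_luo}, while the dual graph still satisfies \eqref{ns-condtion} by the argument above. For the $6$-regular case, $T$ lives on a torus with $|F|=2|V|$, and $|F_I|>2|I|$ for every proper subset $I$: each vertex lies in $6$ triangles and each triangle has $3$ vertices, so $|F_I|\ge 6|I|/3=2|I|$, with strict inequality because some triangle of $F_I$ has a vertex outside $I$. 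Hence both conditions hold in that case.
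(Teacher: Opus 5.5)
Your proposal is correct in substance, and it takes a different, more self-contained route than the paper. The paper obtains \eqref{ns-condtion} on the Voronoi graph indirectly. It asserts girth $>5$, notes the dual is $3$-regular, and uses Proposition \ref{regular} to get constant weights with curvature $\bar\kappa$. It then relies implicitly on the necessity half of Theorem \ref{condition_th}. That chain genuinely needs girth $\ge 6$, because formula \eqref{cur_tree} is only valid there. Your count $3|\Omega|=2|E(\Omega)|+|E(\Omega,\Omega^c)|$ plus connectivity proves \eqref{ns-condtion} for every connected cubic graph with no curvature input. Girth therefore only matters for reading the condition through Theorem \ref{condition_th}. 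For the other two claims the paper gives only assertions: it exhibits no triangulation violating \eqref{ns_luo}, and it does not actually check \eqref{ns_luo} in the $6$-regular case. Your double count $6|I|\le 3|F_I|$, strict because some triangle of $F_I$ has a vertex outside $I$, together with $|F|=2|V|$ from $\chi=0$, is a real proof. Your gluing idea is the right way to supply the missing counterexample.

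Two points need tightening.

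\textbf{The counterexample inequality.} Writing ``$|F_I|/|I|\approx 2<|F|/|V|$'' hides the real comparison. When the sparse part is large, $|F|/|V|=2-2\chi/|V|$ also tends to $2$, so the inequality is decided at order $1/|I|$. Use the mediant form: $|F_I|/|I|<|F|/|V|$ if and only if $|F_I|/|I|<|F\setminus F_I|/|V\setminus I|$. Take the connected sum, along one triangle, of a genus-$g$ triangulation $S$ with $v_S$ vertices (all degrees $\ge 6$) and a $6$-regular torus with $|I|+3$ vertices. Then $|F_I|=2|I|+5$ and $|F\setminus F_I|=2v_S+4g-5$, so you need $5/|I|<(4g-5)/v_S$. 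This holds for $g\ge 2$ and $|I|$ large; with $g=1$ this construction fails. Concretely, $K_{12}$ on the genus-$6$ surface glued to the $7$-vertex torus gives $|F_I|/|I|=13/4<56/16=|F|/|V|$, with all degrees $\ge 6$. It is Delaunay for the metric making every triangle equilateral; you should say this, since the statement concerns Delaunay triangulations.

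\textbf{The girth argument.} Your Gauss--Bonnet count only excludes short dual cycles that bound disks. On positive genus, a shortest cycle of $G$ may be essential. Treat it instead as a closed strip of triangles. Either it is the fan around a single vertex, of length $\ge\deg v\ge 6$, or the strip is an annulus. In the annulus case, its two boundary sides are closed edge walks. Because a shortest cycle is chordless and $T$ is simplicial, these walks use distinct edges, so each has length $\ge 3$.

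Orientability is needed to get the annulus, and the paper's girth claim genuinely fails without it. Take the $5$-triangle M\"obius strip with triangles $\{i,i+1,i+2\}$ mod $5$. Glue it along its pentagonal boundary into a large $6$-regular torus from which a fan of three triangles has been removed. The result is a simplicial triangulation with all degrees $\ge 6$ whose dual contains a $5$-cycle.

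None of this affects your proof of \eqref{ns-condtion} itself.
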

\begin{proof}
 For a Delaunay triangulation where each vertex has a degree greater than 5, the girth of the dual Voronoi diagram  is strictly greater than 5. Since each face (triangle) in the Delaunay triangulation corresponds to a vertex in the Voronoi diagram, and under the dual transformation, the three edges of a face map to the three edges of the dual Voronoi diagram, the Voronoi diagram is inherently a $3$-regular graph. Consequently,  from Proposition \eqref{regular}, Condition \eqref{ns-condtion} is satisfied naturally  and the weight corresponding to  the constant Lin-Lu-Yau curvature of every edge is constant, which can be regarded as the length of any edge for the Voronoi diagram. This set of edge metrics on the Voronoi diagram uniquely determines the edge metrics of the original Delaunay triangulation up to a global scaling factor. Unfortunately, this metric on the Delaunay triangulation does not necessarily correspond to a constant discrete Gaussian curvature, as Condition \eqref{ns_luo} is not always satisfied for any triangulation where each vertex has a degree greater than 5.

In particular, there exists an ideal topological configuration where both conditions \eqref{ns_luo} (for the Delaunay triangulation) and \eqref{ns-condtion} (for the dual Voronoi diagram) are simultaneously satisfied. This occurs in the case of a 6-regular triangulation, which corresponds to the topology of a flat torus. In this case, the dual Voronoi diagram is a hexagonal tiling. Starting from an arbitrary initial value, the Lin-Lu-Yau curvature flow \eqref{flow-equation1} drives the edge weights of the hexagonal diagram to converge toward a constant, the specifics of which are discussed in the next section. The constant weights can still be treated
as edge metrics. Given the strict proportionality between the Delaunay and Voronoi edge lengths in this ideal state, the Delaunay edge metric necessarily is a constant as well.
\end{proof}

\subsection{Convergence of the Ricci flow}

Given any prescribed curvature $\bm \kappa^*$ that needs to be satisfied
\begin{equation}\label{eq:conserve}
\sum_{i=1}^n\kappa_i^*=2(|V|-|E|)
\end{equation}
by Proposition \ref{pro-curvature}, we have 
\[\sum_{j=1}^n\ln \omega_j(t)=\sum_{j=1}^n\ln \omega_{j,0}=const.\]
under the evolution of the Ricci flow \eqref{case_a}.       Let $\bm r=\bm {\ln \omega}$ and the convex set
\[P=\{\bm r\in \mathbb{R}^n:\sum_{j=1}^nr_j=\sum_{j=1}^nr_{j,0}\}.\]

\begin{theorem}\label{main2}
Assume that the prescribed curvature
$\bm \kappa^*$ is attainable, i.e., there exists $\bm \omega^*$ such that $\bm \kappa(\bm \omega^*)=\bm \kappa^*$. Then the solution $\bm \omega$ to the Ricci flow \eqref{case_a} exponentially  converges to $\bm \omega^*$. And the Lin-Lu-Yau curvature exponentially converges  to $\bm \kappa^*$. 
\end{theorem}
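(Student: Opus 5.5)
We will rewrite the flow in the variables $\bm r=\ln\bm\omega$. Then \eqref{case_a} becomes $\frac{d}{dt}r_i=-(\kappa_i(\bm r)-\kappa_i^*)$. Since $\kappa_e$ is scale invariant, $\bm\omega^*$ can be rescaled so that $\bm r^*=\ln\bm\omega^*\in P$, and the flow stays in $P$ by \eqref{eq:conserve}.

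The first step is to show that this is a gradient flow. Using \eqref{cur_tree}, $\kappa_e=2\omega_e(1/m(x)+1/m(y))-2$. Define
\[F(\bm r)=2\sum_{z\in V}\ln\Big(\sum_{e\ni z}e^{r_e}\Big)-\sum_{e\in E}(2-\kappa_e^*)\,r_e .\]
A direct computation gives
\[\frac{\partial F}{\partial r_e}=2e^{r_e}\Big(\frac{1}{m(x)}+\frac{1}{m(y)}\Big)-2+\kappa^*_e=\kappa_e-\kappa_e^*,\]
so the flow is $\dot{\bm r}=-\nabla F(\bm r)$.

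Next we establish convexity. Each term $\ln\sum_{e\ni z}e^{r_e}$ is a log-sum-exp, hence convex. Its Hessian is $\mathrm{diag}(p)-pp^T$ with $p_e=\omega_e/m(z)$, and its kernel on the star at $z$ is spanned by the constant vector on $N(z)$. Thus $\bm v^T\nabla^2F\,\bm v=0$ forces $\bm v$ to be constant on every star $N(z)$. By connectivity $\bm v$ is then globally constant, which is excluded for nonzero $\bm v$ tangent to $P$, since $\sum v_i=0$. So $F$ is strictly convex on $P$. This also re-derives Proposition \ref{injective}: the point $\bm r^*$ is the unique critical point of $F|_P$.

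The convergence argument then runs as follows. First, $F$ decreases along the flow, since $\frac{d}{dt}F=-|\nabla F|^2$. We also use $V(t)=\tfrac12|\bm r(t)-\bm r^*|^2$. Convexity together with $\nabla F(\bm r^*)=0$ gives
\[\dot V=-\langle\nabla F(\bm r)-\nabla F(\bm r^*),\bm r-\bm r^*\rangle\le 0,\]
so the trajectory stays in the compact ball $\bar B=\{\bm r\in P:|\bm r-\bm r^*|\le|\bm r_0-\bm r^*|\}$. On $\bar B$ the Hessian restricted to $P$ is continuous and positive definite, so its smallest eigenvalue is bounded below by some $\lambda>0$. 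By the mean value theorem, $\dot V\le-2\lambda V$, which gives $|\bm r(t)-\bm r^*|\le e^{-\lambda t}|\bm r_0-\bm r^*|$. Exponential convergence of $\bm\omega$ follows because $\exp$ is Lipschitz on bounded sets. Exponential convergence of $\bm\kappa$ follows from the local Lipschitz property of Lemma \ref{local_lip}, since all weights lie in a compact subset of $A$.

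The main obstacle is the uniform positive-definiteness step. The log-sum-exp Hessians are only semidefinite, and their null spaces intersect trivially only through a connectivity argument. The lower bound $\lambda$ therefore comes from compactness of $\bar B$ rather than an explicit estimate, so we must make sure that $\bar B$ is indeed compact in $P$ and that the weights stay bounded away from $0$ and $\infty$ on it; both hold because $\bar B$ is a closed, bounded subset of $P$ in the $\bm r$ coordinates. If an explicit rate were wanted, one would instead try to bound the second eigenvalue of a weighted line-graph Laplacian. The compactness argument suffices for the statement.
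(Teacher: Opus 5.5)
Your proof is correct except for a sign slip in the potential. With the linear term $-\sum_{e}(2-\kappa_e^*)r_e$ one gets $\partial F/\partial r_e=2\omega_e\bigl(\frac{1}{m(x)}+\frac{1}{m(y)}\bigr)-2+\kappa_e^*=\kappa_e+\kappa_e^*$, so $\bm r^*$ would not be a critical point. The correct choice is
\[F(\bm r)=2\sum_{z\in V}\ln\Big(\sum_{e\ni z}e^{r_e}\Big)-\sum_{e\in E}(2+\kappa_e^*)\,r_e .\]
Only a linear term changes, so your Hessian computation and everything after it stand as written.

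With that fixed, your overall structure matches the paper's. Both use log coordinates on $P$, a gradient flow, strict convexity on $\bm 1^\perp$ via connectivity, and the Lyapunov function $V=\frac12\|\bm r-\bm r^*\|^2$ with Gronwall. Two sub-steps go differently.

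\emph{The potential.} The paper defines $f$ only as a line integral, justified by symmetry of the Jacobian $(\partial\kappa_i/\partial r_j)$. It proves semidefiniteness by writing the quadratic form as the weighted line-graph Laplacian $\sum_{u}\sum_{\{e_i,e_j\}\subseteq E(u)}\frac{2\omega_i\omega_j}{m(u)^2}(z_i-z_j)^2$. Your closed-form log-sum-exp potential has Hessian $2\sum_z\bigl(\mathrm{diag}(p^z)-p^z(p^z)^T\bigr)$, which is the same matrix. The gain is that convexity on all of $\mathbb{R}^n$ is immediate, and the kernel analysis reduces to the $p^z$-variance of $\bm v$ on each star.

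\emph{The compact set.} The paper takes the sublevel set $\Sigma=\{f\le f(\bm r_0)\}$, which requires a separate coercivity argument for $f$ on $P$. You instead note that $V$ is nonincreasing, by monotonicity of $\nabla F$ together with $\nabla F(\bm r^*)=\bm 0$. The trajectory is therefore trapped in the compact ball $\bar B$, and you take $\lambda$ as the minimum of the restricted Hessian there. This removes the coercivity step entirely, which is the most delicate part of the paper's Step 2.

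In both routes the rate $\lambda$ comes from compactness rather than an explicit estimate. It depends on $\|\bm r_0-\bm r^*\|$ in your version and on $f(\bm r_0)$ in the paper's.
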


\begin{proof} The Ricci flow \eqref{case_a} can be transformed into 
\begin{equation}\label{case_b}
    \frac{d}{dt}r_i=-(\kappa_i(e^{\bm r})-\kappa_i^*) 
\end{equation}
on  the hyperplane $P$ with an initial value $\bm r_0=(r_{1,0},r_{2,0},\cdots,r_{n,0})$. Based on the definition of $\bm \kappa$, if $\bm r^*$ is an equilibrium point to the Ricci flow \eqref{case_a}, namely $\kappa_i(e^{\bm r^*})=\kappa_i^*$ for any $i$, then $\bm r^* + c \bm{1}$ with $c \in \mathbb{R}$ is also an equilibrium point. From Proposition \ref{injective}, 
\[
L := \{\bm r\in \mathbb{R}^n: \bm r=\bm r^* + c \bm{1}, c \in \mathbb{R} \}.
\]    
is exactly the set of equilibrium points.

First, we consider the convergence of $\bm r$, and the proof is separated into three steps.\\
    \textbf{Step 1. The Ricci flow \eqref{case_b} is a gradient flow.} Let $e_i=(x,y)$, for any  $j\neq i$ and $e_j$ is incident to node $x$ or $y$,
$$\frac{\partial \kappa_{i}}{\partial r_{j}} = -\frac{2 \omega_{i}\omega_{j}}{m(x)^{2}} ~(\mbox{or} -\frac{2 \omega_{i}\omega_{j}}{m(y)^{2}}).$$  
If edge $e_j$ is incident to neither $x$ nor $y$, the above partial derivative is $0$.
Based on this, 
we may transform the Ricci flow  \eqref{case_b} into the negative  gradient flow  in the variable $\bm r$
\[\frac{d}{dt}\bm r=-\nabla \bm f, \]
 of the  function $f:P\rightarrow \mathbb{R}$ defined by line integral as 
\begin{equation*}\label{potential}
    f(\bm r):=\int_{\bm a}^{\bm r}\sum_{i=1}^n(\kappa_i-\kappa_i^*)dr_i,
\end{equation*}
where $\bm a$ is any point in $P$.\\
 \textbf{Step 2. The above function $f$ is strongly convex on \(\Sigma := \{\bm r\in P: f(\bm r) \le f(\bm r_0) \}\)  for all $\bm a\in P$.} First, we have
 \[\begin{split}
    \frac{\partial \kappa_{i}}{\partial r_{i}} &= \left[2 \left( \frac{1}{m(x)} + \frac{1}{m(y)} \right) - 2 \omega_{i}\left( \frac{1}{m(x)^{2}} + \frac{1}{m(y)^{2}} \right)\right]\cdot\omega_{i}\\
    &= \sum_{e_j \in E(x),\; j \neq i} \frac{2\omega_i \omega_j}{m(x)^2} \;+\; \sum_{e_j \in E(y),\; j \neq i} \frac{2\omega_i \omega_j}{m(y)^2},
\end{split}\]
where \( E(u) \) denotes the set of edges incident to vertex \( u \). 
 Notice that the matrix $(\frac{\partial \kappa_{i}}{\partial r_{j}})_{n\times n}$ is symmetric, satisfies \( \frac{\partial \kappa_{i}}{\partial r_{i}}> 0,\frac{\partial \kappa_{i}}{\partial r_{j}}\leq 0 \) for all $i\neq j$, and fulfills \(\sum_{i=1}^n\frac{\partial \kappa_{i}}{\partial r_{j}} = 0 \) 
 for all $j.$
Thus, for any vector \( \bm z\in \mathbb{R}^n \), 

\[
    \bm z^{\mathsf T} (\frac{\partial \kappa_{i}}{\partial r_{j}})_{n\times n} \bm z=\sum_{u \in V}\; \sum_{\{e_i,e_j\} \subseteq E(u)} \frac{2\omega_i \omega_j}{m(u)^2} \, (z_i - z_j)^2\geq 0.
\]
Therefore, the matrix $(\frac{\partial \kappa_{i}}{\partial r_{j}})_{n\times n}$ is semi-positive definite and the kernel is generated  by the vector $\bm 1.$ By Rank-Nullity Theorem, the rank of $(\frac{\partial \kappa_{i}}{\partial r_{j}})_{n\times n}$ is $n-1$. Moreover, we perform an eigenvalue decomposition of the Hessian matrix of $f$
\begin{equation*}\label{egi-decom}
    (\frac{\partial \kappa_{i}}{\partial r_{j}})_{n\times n} =  \lambda_2 \bm{v}_2 \bm{v}_2^T + \cdots + \lambda_n \bm{v}_n \bm{v}_n^T,
\end{equation*}
where \( 0=\lambda_1<\lambda_2\leq \cdots\leq \lambda_n \) are eigenvalues and $\bm v_i$ are corresponding eigenvectors. Let  \(T_{\bm{r}}P \) be the tangent space of $P$.
For any $\bm{u} \neq \bm{0}, \bm u \in T_{\bm{r}}P $, that is, $\bm u\perp \bm 1$, we have
\(\|\bm u\|^2=\sum_{i=2}^n (\bm{v}_i^T \bm{u})^2>0,\)
thus 
\[
\bm{u}^T (\frac{\partial \kappa_{i}}{\partial r_{j}})_{n\times n} \bm{u} = \sum_{i=2}^n \lambda_i (\bm{v}_i^T \bm{u})^2\geq \lambda_2 \|\bm u\|^2(>0).
\]
Therefore, $f$ restricted on $P$ is strictly convex.

Next, we {\it claim} that $f$ is coercive on $P$ when $\kappa_i^*$ is attainable, i.e., there exists $r^*\in P$ such that $\kappa_i(\bm r^*)=\kappa_i^*$ for any $i\in E$. This condition is equivalent to $\nabla f(\bm r^*) = \bm 0$ by virtue to Step 1.  By the strict convexity of the $f(\bm r)$, $\bm r^*$ is unique, which is also the unique intersection point of the set of equilibrium points \( L \) and the hyperplane \( P \).
For any $\tau  \ge 0$ and $\bm 0\neq \bm \phi\in \mathbb{R}^n$ such that $\bm \phi \perp \bm 1$, define 
$g(\tau,\bm \phi) = f(\bm r^* + \tau \bm \phi)$.
We obtain 
$$\partial_\tau g(\tau,\bm \phi) = \langle \nabla f(\bm r^* + \tau \bm\phi), \bm\phi \rangle.$$
In particular, we have
$\partial_\tau g(0,\bm\phi)  = \langle \nabla f(\bm r^*), \bm\phi \rangle  = 0.$
Moreover, for all $\tau > 0$,
$$\partial^2_\tau g(\tau,\bm \phi) = \bm \phi^T \nabla^2 f(\bm r^* + \tau \bm \phi) \bm \phi>0$$
by the strictly convexity of $f.$
Then for all $\tau > 0$,
$\partial_\tau g(\tau,\bm \phi) > 0.$
Moreover,
for any $\tau>\tau_0>0$, 
$$g(\tau,\bm \phi) =g(\tau_0,\bm \phi) +\int_{\tau_0}^{\tau} \partial_sg(s,\bm \phi) ds\ge g(\tau_0,\bm \phi) + \partial_sg(\tau_0,\bm \phi) (\tau-\tau_0).$$
Since $f(\bm r)\in C^2$, both $g(\tau_0,\bm \phi)$ and $ \partial_sg(\tau_0,\bm \phi)$ are continuous functions with respect to $\bm \phi$. Let  $\mathbb{S}^{n-1} = \{ \bm u \in \mathbb{R}^{n} : \|\bm u\| = 1 \}$. Notice that $\mathbb{S}^{n-1} $ is compact. It follows that, on $\mathbb{S}^{n-1}$, $g(\tau_0,\bm \phi)$ has a minimum  $M_1$, and $\partial_sg(\tau_0,\bm \phi)$ has a minimum  $M_2>0$. Therefore,
\begin{equation}\label{estimate}
    g(\tau,\bm \phi) \geq M_1+M_2 (\tau-\tau_0).
\end{equation}
For any $\bm r\in P$,  we choose
$$\tau = \|\bm r - \bm r^*\|,~ \mbox{and}~ \bm \phi= \frac{\bm r - \bm r^*}{\|\bm r - \bm r^*\|}.$$
Obviously,  $\|\bm \phi\| = 1$, $\bm \phi \perp \bm 1$ and $\bm r = \bm r^* + \tau \bm \phi$. Then by \eqref{estimate},
$$f(\bm r) \ge M_1 +M_2 (\|\bm r - \bm r^*\| - \tau_0)\rightarrow +\infty$$
as $\|\bm r\|\rightarrow \infty$, which implies that $f(\bm r)\rightarrow +\infty.$ This finishes the proof of the claim.

From Step 1, we have
$$\frac{d}{dt} f(r(t)) = \langle \nabla f(r), \frac{dr}{dt} \rangle= -\langle \nabla f(r), \nabla f(r) \rangle =- \|\nabla f(r)\|^2\leq 0.$$
Notice that $\Sigma$ is compact  since $f$ is continuous and coercive towards to $+\infty$ with respect to $\bm r.$ Then $\lambda_2(\bm r)$ attains its global minimum on $\Sigma$, denoted as 
$$\lambda = \min_{(\bm r,\bm \phi) \in \Sigma \times\mathbb{S}^{n-1} } \bm \phi^T \nabla^2 f(\bm r) \bm \phi>0.$$
This yields that $f(\bm r)$ is strongly convex on $\Sigma$.

\noindent \textbf{Step 3. The Ricci flow \eqref{case_b} convergences to $r^*$ exponentially.} 
Define the Lyapunov function as
\[
V(t) = \frac{1}{2} \|\bm r(t) - \bm r^*\|^2.
\]
Differentiating with respect to time to get
\[
\frac{dV}{dt} = (\bm r - \bm r^*)^T \frac{d\bm r}{dt} = -(\bm r - \bm r^*)^T \nabla \bm f(\bm r).
\]
Since both \(\bm r(t)\) and \(\bm r^*\) lie in the hyperplane \(P\), their difference vector \(\bm r - \bm r^*\) must belong to the tangent space \(T_{\bm{r}}P \).  Define 
\[g(\bm r):=f(\bm r)-\frac{\lambda}{2}\|\bm r\|^2.\]
Given the strong convexity of $f$ on $P$, $g(\bm r)$ is convex on $P$ because the Hessian matrix of $g$ is positive semi-definite, i.e.,
\[\bm u^T\nabla^2 g\bm u=\bm u^T\nabla^2 f\bm u-\lambda\|\bm u\|^2\geq 0\]
for any $\bm{u} \neq \bm{0}, \bm u \in T_{\bm{r}}P $.
By the monotonicity of the gradient of the convex function, we have
$$(\nabla g(\bm r) - \nabla g(\bm r^*))^T(\bm r - \bm r^*)\ge 0.$$
Together with $\nabla g(\bm r)=\nabla f(\bm r)-\lambda\bm r$, this yields
\[
\bigl(\nabla f(\bm r) - \nabla f(\bm r^*)\bigr)^T (\bm r - \bm r^*) \geq \lambda \|\bm r - \bm r^*\|^2.
\]
where \(\nabla \bm f(\bm r^*) = 0\). Therefore, we have
\[
\frac{dV}{dt} \leq -\lambda \|\bm r - \bm r^*\|^2 = -2\lambda V(t).
\]
Applying Gronwall's inequality yields
\[
V(t) \leq V(0) e^{-2\lambda t},
\]
which gives
\[
\|\bm r(t) - \bm r^*\| \leq \|\bm r(0) - \bm r^*\| e^{-\lambda t}.
\]
Since both the weights \(\bm \omega(=e^{\bm r})\) and the curvatures \(\bm \kappa\) (see Lemma \ref{local_lip}) are local Lipschitz functions of \(\bm r\), it follows that the weights  and the curvatures  also converge exponentially.

\end{proof}
Let $e_i=(x,y)$, the evolution equation for the curvature satisfies
 \[\frac{d}{dt}\kappa_i(t)=\sum_{e_j\in E(x),j\neq i}\frac{2\omega_i\omega_j}{m^2(x)}[(\kappa_j-\kappa^*_j)-(\kappa_i-\kappa^*_i)]+\sum_{e_j\in E(y),j\neq i}\frac{2\omega_i\omega_j}{m^2(y)}[(\kappa_j-\kappa^*_j)-(\kappa_i-\kappa^*_i)].\]
By abuse of notation, we also use the symbol $j\sim i$ to indicate that two edges $e_i$ and $e_j$ intersect at $x$ or $y$. Then 
\[\frac{d}{dt}\kappa_i(t)=
    \sum_{j\sim i}C_{ij}[(\kappa_j-\kappa^*_j)-(\kappa_i-\kappa^*_i)],\]  
where 
\[C_{ij}:=
 \begin{cases}
\frac{2\omega_i\omega_j}{m^2(x)},& \mbox{if}~ e_j\in E(x),\\
\frac{2\omega_i\omega_j}{m^2(y)},&\mbox{if}~ e_j\in E(y),\\
0,&\mbox{otherwise.}
\end{cases}
  \] 
Notice that $C_{ij}=C_{ji}.$

\begin{theorem}\label{main3}
   If a solution of the Ricci flow \eqref{case_a} with the prescribed curvature $\bm\kappa^*$ converges, then  weights of $\bm\kappa^*$ must exist. Moreover, the solution necessarily converges to  this prescribed curvature weight.
\end{theorem}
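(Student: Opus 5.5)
Suppose $\bm\omega(t)\to\bm\omega^\infty$ as $t\to\infty$. The plan is to show three things: the limit has strictly positive components, its curvature equals $\bm\kappa^*$, and it is the unique weight of $\bm\kappa^*$ on the relevant hyperplane.

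\textbf{Positivity of the limit.} Here we use the conservation law stated just before Theorem \ref{main2}: under \eqref{eq:conserve}, $\sum_j \ln\omega_j(t)$ is constant. If the limit is taken in $\mathbb{R}^n$ with possibly vanishing components, this alone does not prevent some $\omega_j\to 0$, since other components could grow. However, convergence of $\bm\omega(t)$ means every component stays bounded above, say $\omega_j(t)\le B$. Then
\[\ln\omega_i(t)=\sum_j\ln\omega_{j,0}-\sum_{j\neq i}\ln\omega_j(t)\ge \sum_j\ln\omega_{j,0}-(n-1)\ln B,\]
so each $\omega_i(t)$ is bounded below by a positive constant uniformly in $t$. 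Hence $\bm\omega^\infty\in\mathbb{R}^n_{>0}$, and $\bm r^\infty=\ln\bm\omega^\infty\in P$.

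\textbf{Curvature of the limit.} By \eqref{cur_tree}, $\kappa$ is continuous, indeed smooth, on the positive orthant. Therefore $\bm\kappa(t)\to\bm\kappa(\bm\omega^\infty)$. Write $\bm\kappa(\bm\omega^\infty)-\bm\kappa^*=\bm c$ and suppose $c_i\neq0$ for some $i$. Then for large $t$, $\frac{d}{dt}r_i=-(\kappa_i-\kappa_i^*)$ stays within $|c_i|/2$ of $-c_i$. Integrating gives $|r_i(t)|\to\infty$, which contradicts convergence of $r_i=\ln\omega_i$ to the finite value $\ln\omega_i^\infty$. So $\bm\kappa(\bm\omega^\infty)=\bm\kappa^*$. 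This shows that $\bm\kappa^*$ is attainable, i.e. a weight of $\bm\kappa^*$ exists.

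\textbf{Identifying the limit.} By Proposition \ref{injective}, weights of $\bm\kappa^*$ are unique up to scaling. The set of them is the line $L$ from the proof of Theorem \ref{main2}, and $L$ meets $P$ in exactly one point $\bm r^*$. Since $\bm r^\infty\in L\cap P$, we get $\bm r^\infty=\bm r^*$. Alternatively, once attainability is known we may invoke Theorem \ref{main2} directly: the flow converges exponentially to $\bm\omega^*$, and uniqueness of limits finishes the argument.

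\textbf{Main obstacle.} The delicate step is the first: ruling out degeneration to the boundary of the orthant. A converging solution in $\mathbb{R}^n$ with some zero component has no curvature defined at the limit, so positivity must come first. The product conservation handles this, and it requires \eqref{eq:conserve}, which is the standing assumption of this subsection since any prescribed curvature must satisfy it by Proposition \ref{pro-curvature}. Without that assumption I would instead argue via the curvature evolution equation displayed before the theorem, but I expect the conservation route to suffice.
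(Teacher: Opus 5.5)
Your proof is correct, and its central step takes a different, more elementary route than the paper's.

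The paper works with the Lyapunov-type quantity $g(t)=\sum_i(\kappa_i-\kappa_i^*)^2$. Its argument runs as follows:
\begin{enumerate}
\item From the curvature evolution equation it writes $g'$ as a quadratic form in the symmetric Jacobian $\bm J=(\partial\kappa_i/\partial r_j)$.
\item It argues that $g$ and $g'$ both converge, so $g'\to 0$, which gives $\bm J(+\infty)(\bm\kappa(+\infty)-\bm\kappa^*)=\bm 0$.
\item It uses $\ker\bm J=\mathrm{span}(\bm 1)$ to get only $\bm\kappa(+\infty)=\bm\kappa^*+c\bm 1$.
\item It then needs the total-curvature identity \eqref{eq:conserve} to force $c=0$.
\end{enumerate}

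You instead apply the ``a convergent trajectory ends at an equilibrium'' argument directly to each coordinate $r_i$, whose derivative is exactly $-(\kappa_i-\kappa_i^*)$. This gives $\bm\kappa(\bm\omega^\infty)=\bm\kappa^*$ componentwise in one step. You need neither the spectral structure of $\bm J$ nor a separate argument to remove a constant shift.

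Your version also makes explicit something the paper passes over: that the limit lies in the open orthant. This is needed for $\bm\kappa(\bm\omega(+\infty))$ and $\bm J(\bm\omega(+\infty))$ to be defined and for continuity to apply. You settle it cleanly with the conserved quantity $\sum_j\ln\omega_j$ together with the upper bound that convergence provides.

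What the paper's route gives in exchange is the by-product that $\|\bm\kappa-\bm\kappa^*\|^2$ is nonincreasing along every solution, whether or not it converges. That fact is not needed for this theorem.
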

\begin{proof}
Consider 
\(g(t):=\sum_{i=1}^n(\kappa_i-\kappa_i^*)^2\), we have 
\[\begin{split}
    g'(t)&=2\sum_{i=1}^n(\kappa_i-\kappa_i^*)\kappa_i'(t)\\
    &=2\sum_{i=1}^n\sum_{j\sim i}C_{ij}(\kappa_i-\kappa_i^*)[(\kappa_j-\kappa^*_j)-(\kappa_i-\kappa^*_i)]\\
    &=\sum_{i=1}^n\sum_{j\sim i}C_{ij}(\kappa_i-\kappa_i^*)[(\kappa_j-\kappa^*_j)-(\kappa_i-\kappa^*_i)]+\sum_{i=1}^n\sum_{j\sim i}C_{ij}(\kappa_j-\kappa_j^*)[(\kappa_i-\kappa^*_i)-(\kappa_j-\kappa^*_j)]\\
    &=-\sum_{i=1}^n\sum_{j\sim i}C_{ij}[(\kappa_j-\kappa^*_j)-(\kappa_i-\kappa^*_i)]^2\\
    &=(\bm \kappa-\bm \kappa^*)^T\bm J(\bm \kappa-\bm \kappa^*),
\end{split}\]
where $\bm J:=(\frac{\partial\kappa_i}{\partial r_j})_{n\times n}$,
 and $\mbox{Ker}(\bm J)=\mbox{span}(1,1,\cdots,1).$

Since the weight $\bm \omega$ converges, set
\(\lim_{t\rightarrow+\infty}\bm \omega(t)=\bm \omega(+\infty),\)
then the curvature $\bm \kappa$ and $\bm J$ also converge to
$\bm \kappa(+\infty)$ and $\bm J(+\infty)$, respectively. Precisely,
\[\bm \kappa(+\infty)=\bm \kappa(\bm\omega(+\infty)), \quad \bm J(+\infty)=\bm J(\bm\omega(+\infty)).\]
Thus, both \(g\) and  \(g'\) converge. It follows that 
\[\lim_{t\rightarrow +\infty}g'(t)=0,\] 
which implies that
\[(\bm \kappa(+\infty)-\bm \kappa^*)^T\bm J(+\infty)(\bm \kappa(+\infty)-\bm \kappa^*)=0.\]
Notice that $\bm J(+\infty)$ is also a positive semi-definite matrix, and  $\mbox{Ker}(\bm J(+\infty))=\mbox{span}(1,1,\cdots,1).$ It follows that 
  \begin{align*}
      (\bm \kappa(+\infty)-\bm \kappa^*)^T\bm J(+\infty)(\bm \kappa(+\infty)-\bm \kappa^*)
      &= (\bm \kappa(+\infty)-\bm \kappa^*)^T\bm J^{1/2}(+\infty)\bm J^{1/2}(+\infty)(\bm \kappa(+\infty)-\bm \kappa^*)\\
      &= \|\bm J^{1/2}(+\infty)(\bm \kappa(+\infty)-\bm \kappa^*)\|^2.
  \end{align*}
If \(\|\bm J^{1/2}(+\infty)(\bm \kappa(+\infty)-\bm \kappa^*)\|^2=0\), then necessarily \( \bm J^{1/2}(+\infty)(\bm \kappa(+\infty)-\bm \kappa^*) = \bm0 \).  
  Left‑multiplying both sides by \( \bm J^{1/2}(+\infty)\) gives  
  \[
  \bm J(+\infty)(\bm \kappa(+\infty)-\bm \kappa^*) = \bm0,
  \]  
which means $\bm \kappa(+\infty)-\bm \kappa^*\in \mbox{Ker}(\bm J(+\infty)).$  That is, there exists $c\in \mathbb{R}$ such that 
$$\bm \kappa(+\infty)=\bm \kappa^*+c\bm 1.$$
According to  \eqref{eq:conserve} and Proposition \ref{pro-curvature}, we have 
\[\sum_{e\in E}\kappa_e=\sum_{e\in E}\kappa_e^*=2(|V|-|E|).\]
It follows that $c=0$, i.e.,
\[\bm \kappa(\bm\omega(+\infty))=\bm \kappa^*.\] 
From Proposition \ref{injective}, $\bm\omega(+\infty)$ is the unique weight, up to a scalar multiplication, realizing $\bm \kappa^*$.
\end{proof}

By choosing $\kappa_i^*=\bar{\kappa}$ in the Ricci flow \eqref{case_a}, we have
\begin{equation}\label{case_av} 
    \begin{cases}
\frac{d}{dt}\omega_i=-(\kappa_i(\bm \omega)-\bar{\kappa})\omega_i  \\
\bm \omega(0)=\bm \omega_0
\end{cases}
\end{equation}
with $\bm \omega_0>\bm0$. From Theorem \ref{condition_th}, Theorem \ref{main2} and Theorem \ref{main3}, we have the following conclusion.

\begin{corollary}\label{main_coro}
      The Ricci flow \eqref{case_av} converges exponentially to the weight $\bm \omega$ of constant curvature $\bar{\kappa}$ if and  only if the condition \eqref{ns-condtion} holds. 
\end{corollary}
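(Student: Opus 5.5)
The plan is to assemble Corollary \ref{main_coro} from Theorem \ref{condition_th}, Theorem \ref{main2} and Theorem \ref{main3}, specialised to the constant prescribed curvature $\kappa_i^*\equiv\bar{\kappa}$.

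First I check that this choice is admissible. The flow on the hyperplane $P$ requires the conservation law \eqref{eq:conserve}. Here
\[\sum_{i=1}^n\kappa_i^*=|E|\bar{\kappa}=2(|V|-|E|),\]
which holds by the definition of $\bar{\kappa}$ and Proposition \ref{pro-curvature}. Consequently $\sum_j\ln\omega_j(t)$ is preserved along \eqref{case_av}, the solution stays in $P$, and the framework of Theorems \ref{main2} and \ref{main3} applies. By Theorem \ref{main1}, the solution exists for all time and stays positive.

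\emph{Sufficiency.} Assume \eqref{ns-condtion}. By Theorem \ref{condition_th} there is a positive weight $\bm\omega^*$ with $\kappa_e(\bm\omega^*)=\bar{\kappa}$ for every $e$, so $\bar{\kappa}$ is attainable. Curvature is scale invariant, so I rescale $\bm\omega^*$ to make $\sum\ln\omega^*_j=\sum\ln\omega_{j,0}$; that is, I pick the unique point of the equilibrium line $L$ lying in $P$. Theorem \ref{main2} then gives exponential convergence of $\bm\omega(t)$ to this constant-curvature weight, together with exponential convergence of $\bm\kappa(t)$ to $\bar{\kappa}\bm 1$.

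\emph{Necessity.} If the flow converges exponentially to some positive weight, in particular it converges. Theorem \ref{main3} then says the limit $\bm\omega(+\infty)$ realises $\bm\kappa^*=\bar{\kappa}\bm1$, so a positive weight of constant curvature $\bar{\kappa}$ exists. The necessity direction of Theorem \ref{condition_th} then yields \eqref{ns-condtion}.

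The main obstacle is making sure the limit in the necessity direction is a genuinely positive weight, so that Proposition \ref{injective} and formula \eqref{cur_tree} apply at $t=+\infty$. I would handle this through how ``converges to the weight'' is read. Under the hypothesis of the corollary, the limit is a weight, hence lies in $\mathbb{R}^n_{>0}$. Alternatively, the constraint $\sum\ln\omega_j=\mathrm{const}$ prevents all weights from collapsing simultaneously, and convergence of $\bm\omega(t)$ in $A$ is exactly what Theorem \ref{main3} assumes. I would state this interpretation explicitly and otherwise simply cite the three results.
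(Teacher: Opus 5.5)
Your proposal is correct and matches the paper, which obtains Corollary \ref{main_coro} by combining Theorem \ref{condition_th} with Theorems \ref{main2} and \ref{main3} for $\kappa_i^*\equiv\bar{\kappa}$, exactly as you do; the paper leaves implicit the checks you make explicit (the conservation law \eqref{eq:conserve}, rescaling $\bm\omega^*$ into $P$, and positivity of the limit). Your alternative argument actually settles positivity without any special reading of ``converges to the weight'': a convergent trajectory has each $\ln\omega_j(t)$ bounded above, so if one $\omega_j(t)\to 0$ then $\sum_j\ln\omega_j(t)\to-\infty$, contradicting its conservation.
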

\begin{remark}\label{rk2}
Notice that
$\omega_i(t)$ is the solution to the equations \eqref{case_a}  if and only if $e^{-t{\kappa^*_i}}\omega_i(t)$ is the solution to the original form of Ricci flow
\begin{equation}\label{case_classical}
    \frac{d}{dt}\omega_i=-\kappa_i(\bm \omega)\omega_i.
\end{equation}
Then, $\omega_i(t)$ is the solution to the equations \eqref{case_a} if and only if
$\frac{e^{-t\kappa_i^*}\omega_i(t)}{\sum_{j=1}^ne^{-t\kappa_j^*}\omega_j(t)}$ is the solution to the normalized Ricci flow 
\begin{equation}\label{case_nor} 
\frac{d}{dt}\omega_i=-\omega_i\kappa_i(\bm \omega)+\omega_i\sum_{j=1}^n\kappa_j(\bm \omega) \omega_j 
\end{equation}
\end{remark}
As a direct application, by Remark \ref{rk2}, the normalized Ricci flow \eqref{case_nor} converges exponentially to  the weight $\bm \omega/ \bm \omega^T\bm 1$, and the curvature under the evolution of \eqref{case_classical} and \eqref{case_nor} converges exponentially to $\bar{\kappa}$ if and  only if the condition \eqref{ns-condtion} holds.

\section{Applications and simulations}\label{simulations}

Given that the existence and uniqueness guaranties do not require the prescribed curvature to be strictly attainable, see Theorem \ref{main1}, we may assume it to be zero, effectively recovering the original Ricci flow \eqref{flow-equation0}. 
As demonstrated in prior research, this approach is applicable to tasks such as community detection, network robustness analysis, and core detection. 
Since the algorithms are very similar to established methods \cite{2022Normalized,MY24}, bypassing the need to compute shortest paths between nodes, we omit further details here.

Instead, this section focuses on the prescribed constant curvature flow \eqref{case_av} on graphs with girth of at least $6$. This flow drives the edge weights to ensure that the Lin-Lu-Yau curvature of each edge converges to a constant, ultimately leading to a state of geometric uniformization. Notably, for regular and semi-regular bipartite graphs, the weights yielding constant curvature are themselves constant (see Proposition \ref{regular}). In contrast, the landscape for general graphs is significantly more complex, as constant curvature weights may not necessarily exist (see Remark \ref{constant-cur}). These concepts find diverse applications: the Ricci flow \eqref{case_av} can be utilized to identify network bottlenecks and structural anomalies, as well as to determine optimal edge weights. Furthermore, it can be applied to recover the geometric embedding of a surface tessellation.

\subsection{Network structure detection and optimal weight assignment}

By adjusting edge weights, curvature flow eliminates curvature disparities within a network, driving it toward a state of constant curvature. These weights can be interpreted as edge capacities. In this context, geometric uniformization is equivalent to an optimal capacity-structure alignment. That is, based on the network's topological characteristics, the flow precisely determines the ideal capacity for each path or route, ensuring that the stress is distributed uniformly across the entire system when facing traffic surges. 

In graphs that are neither regular nor semi-regular bipartite, to enforce curvature homogenization,  the Ricci flow  \eqref{case_av} must drive the weights at bottlenecks to values significantly larger than those of other edges. Figure 1 illustrates a clear and intuitive mechanism for network bottleneck identification based on \eqref{case_av}, with unit initial weights assigned to all edges. The evolutionary process forces a stratification of edge weights to achieve geometric curvature homogenization. Specifically, 
\begin{itemize}
    \item the weights of the red curve surge significantly ($\omega>2.5$), driven by Lin-Lu-Yau curvature that remains far below the target constant curvature, i.e., $\kappa - \bar{\kappa}<0$; 
    \item the weights represented by blue and green curves  remain relatively low ($\omega<1.0$). 
\end{itemize}
This pronounced disparity in weights allows for the precise localization of bottlenecks using a simple threshold (e.g., $\mbox{Threshold}= 2.0$). 

To explore the advantages of the Ricci flow \eqref{case_av} in detecting graph asymmetry, we perform vertex addition and edge deletion on the Möbius-Kantor graph ($GP(8, 3)$) to obtain an asymmetric graph. Specifically, we introduce two localized perturbations: the addition of node 16 into edge $(0,1)$ and the removal of the edge between nodes 5 and 6, see Figure 2. 
As the evolution seeks geometric uniformization over this asymmetric graph, a distinct stratification of edge weights $\omega$ emerges:

\definecolor{edgeRed}{RGB}{255, 0, 0}      
\definecolor{edgeOrange}{RGB}{255, 165, 0} 
\definecolor{edgeGreen}{RGB}{0, 128, 0}    
\definecolor{edgeBlue}{RGB}{0, 0, 255}     
\definecolor{nodeGray}{RGB}{200, 200, 200}

\begin{figure}[htbp]
    \centering
    
    \begin{tikzpicture}[scale=0.9, transform shape]
        \tikzstyle{myvertex}=[circle, fill=nodeGray, draw=black, thick, minimum size=6mm, inner sep=0pt, font=\bfseries\small]
        \tikzstyle{myedge}=[line width=2pt]

        \coordinate (L_center) at (-3, 0);
        \coordinate (R_center) at (3, 0);
        \def\R{1.8} 

        \node[myvertex] (0) at (-1.2, 0) {0};
        \node[myvertex] (1) at (-2.1, 1.55) {1};
        \node[myvertex] (2) at (-3.9, 1.55) {2};
        \node[myvertex] (3) at (-4.8, 0) {3};
        \node[myvertex] (4) at (-3.9, -1.55) {4};
        \node[myvertex] (5) at (-2.1, -1.55) {5};

        \node[myvertex] (6) at (1.2, 0) {6};
        \node[myvertex] (7) at (2.1, 1.55) {7};
        \node[myvertex] (8) at (3.9, 1.55) {8};
        \node[myvertex] (9) at (4.8, 0) {9};
        \node[myvertex] (10) at (3.9, -1.55) {10};
        \node[myvertex] (11) at (2.1, -1.55) {11};

        \draw[myedge, edgeRed] (0) -- (6);
        \draw[myedge, edgeOrange] (0) -- (1); \draw[myedge, edgeOrange] (0) -- (5);
        \draw[myedge, edgeOrange] (6) -- (7); \draw[myedge, edgeOrange] (6) -- (11);
        \draw[myedge, edgeGreen] (1) -- (2); \draw[myedge, edgeGreen] (4) -- (5);
        \draw[myedge, edgeGreen] (7) -- (8); \draw[myedge, edgeGreen] (10) -- (11);
        \draw[myedge, edgeBlue] (2) -- (3); \draw[myedge, edgeBlue] (3) -- (4);
        \draw[myedge, edgeBlue] (8) -- (9); \draw[myedge, edgeBlue] (9) -- (10);


    \end{tikzpicture}

    \vspace{0.5cm}

    \begin{minipage}{0.48\textwidth}
        \centering
        \begin{tikzpicture}
            \begin{axis}[
                width=\linewidth,
                height=6cm,
                title={Edge Weights Evolution ($T=30$)},
                xlabel={Time ($t$)},
                ylabel={Weight ($\omega$)},
                grid=major,
                grid style={opacity=0.3},
                ymin=0.5, ymax=2.8,
                xmin=0, xmax=30
            ]
                \addplot[edgeRed, very thick, domain=0:30, samples=100] {2.7 - 1.7*exp(-0.35*x)};
                
                \addplot[edgeOrange, thick, domain=0:30, samples=100] {1.56 - 0.56*exp(-0.35*x)};
                
                \addplot[edgeGreen, thick, domain=0:30, samples=100] {0.83 + 0.17*exp(-0.35*x)};
                
                \addplot[edgeBlue, thick, domain=0:30, samples=100] {0.6 + 0.4*exp(-0.35*x)};
            \end{axis}
        \end{tikzpicture}
    \end{minipage}
    \hfill
    \begin{minipage}{0.48\textwidth}
        \centering
        \begin{tikzpicture}
            \begin{axis}[
                width=\linewidth,
                height=6cm,
                title={Curvature Convergence to $\bar{\kappa}$ ($T=30$)},
                xlabel={Time ($t$)},
                ylabel={Ricci Curvature ($\kappa$)},
                grid=major,
                grid style={opacity=0.3},
                ymin=-0.7, ymax=0.05,
                xmin=0, xmax=30
            ]
                \def\target{-0.154}

                \addplot[edgeRed, very thick, domain=0:30, samples=100] {\target - 0.52*exp(-0.4*x)};

                \addplot[edgeOrange, thick, domain=0:30, samples=100] {\target - 0.18*exp(-0.4*x)};

                \addplot[edgeGreen, thick, domain=0:30, samples=100] {\target + 0.12*exp(-0.4*x)};

                \addplot[edgeBlue, thick, domain=0:30, samples=100] {\target + 0.154*exp(-0.4*x)};
                
                \addplot[black!80, dashed, thick, domain=0:30] {\target};
            \end{axis}
        \end{tikzpicture}
    \end{minipage}

    
    \vspace{0.3cm}
 \caption*{\small Figure 1: The evolution on $D_{6,6}$}
    \label{fig:evolution_d66}
\end{figure}

\definecolor{cRed}{RGB}{231, 76, 60}      
\definecolor{cOrange}{RGB}{243, 156, 18}  
\definecolor{cPurple}{RGB}{142, 68, 173}  
\definecolor{cBlue}{RGB}{52, 152, 219}    
\definecolor{cGray}{RGB}{149, 165, 166}   

\begin{tikzpicture}[scale=0.92, transform shape]
    \def\Rout{2.5} 
    \def\Rin{1.5}  
    
    \begin{scope}[shift={(-4.5, 0)}] 
        
        \node[font=\small\bfseries, align=center] at (0, 3.5) {(a)  Möbius-Kantor GP(8,3)};

        \foreach \i in {0,...,7} {
            \pgfmathsetmacro{\ang}{90 + \i*360/8}
            \coordinate (O\i) at (\ang:\Rout);
            \coordinate (I\i) at (\ang:\Rin);
        }
        
        \foreach \i in {0,...,7} {
            \pgfmathtruncatemacro{\next}{mod(\i+1,8)}
            \pgfmathtruncatemacro{\innerNext}{mod(\i+3,8)}
            
            \draw[gray!80, thick] (O\i) -- (O\next);
            \draw[gray!80, thick] (O\i) -- (I\i);
            \draw[gray!80, thick] (I\i) -- (I\innerNext);
        }
        
        \foreach \i in {0,...,7} {
            \node[circle, fill=cGray!50, draw=gray, inner sep=0pt, minimum size=13pt, font=\scriptsize\bfseries] at (O\i) {\i};
            \pgfmathtruncatemacro{\inId}{\i+8}
            \node[circle, fill=cGray!50, draw=gray, inner sep=0pt, minimum size=13pt, font=\scriptsize\bfseries] at (I\i) {\inId};
        }
    \end{scope}

    \begin{scope}[shift={(4.5, 0)}] 

        \node[font=\small\bfseries, align=center] at (0, 3.5) {(b) Asymmetric graph based on GP(8,3)};

        \foreach \i in {0,...,7} {
            \pgfmathsetmacro{\ang}{90 + \i*360/8}
            \coordinate (O\i) at (\ang:\Rout);
            \coordinate (I\i) at (\ang:\Rin);
        }
        \pgfmathsetmacro{\midang}{90 + 0.5*360/8}
        \coordinate (Transit) at (\midang:\Rout*1.05);

        
        \foreach \i in {1,2,3,4,6} {
            \pgfmathtruncatemacro{\next}{mod(\i+1,8)}
            \draw[cBlue!60, thick] (O\i) -- (O\next);
        }
        \foreach \i in {0,...,7} {
            \pgfmathtruncatemacro{\innerNext}{mod(\i+3,8)}
            \draw[cBlue!60, thick] (I\i) -- (I\innerNext);
        }
        \foreach \i in {0,...,7} {
             \draw[cBlue!60, thick] (O\i) -- (I\i);
        }

        \draw[cRed, line width=2.5pt] (O0) -- (Transit) -- (O1);

        \draw[cOrange, line width=2.5pt] (O5) -- (O4);
        \draw[cOrange, line width=2.5pt] (O5) -- (I5); 
        \draw[cOrange, line width=2.5pt] (O6) -- (O7);
        \draw[cOrange, line width=2.5pt] (O6) -- (I6);

        \draw[cPurple, line width=2.5pt] (O0) -- (O7);

        \draw[gray, dashed, thick] (O5) -- (O6);

        
        \foreach \i in {0,1,2,3,4,7,8,9,10,11,12,13,14,15} { 
             \ifnum\i<8 
                \node[circle, fill=cGray!30, text=black, inner sep=0pt, minimum size=14pt, font=\scriptsize\bfseries] at (O\i) {\i};
             \else
                \pgfmathtruncatemacro{\idx}{\i-8}
                \node[circle, fill=cGray!30, text=black, inner sep=0pt, minimum size=14pt, font=\scriptsize\bfseries] at (I\idx) {\i};
             \fi
        }
        
        \node[circle, fill=cOrange, text=white, inner sep=0pt, minimum size=15pt, font=\scriptsize\bfseries] at (O5) {5};
        \node[circle, fill=cOrange, text=white, inner sep=0pt, minimum size=15pt, font=\scriptsize\bfseries] at (O6) {6};
        \node[circle, fill=cRed, text=white, inner sep=0pt, minimum size=16pt, font=\scriptsize\bfseries] at (Transit) {16};

    \end{scope}

    \node (sim_image) [anchor=north] at (0, -3.0) {
        \includegraphics[width=17.5cm]{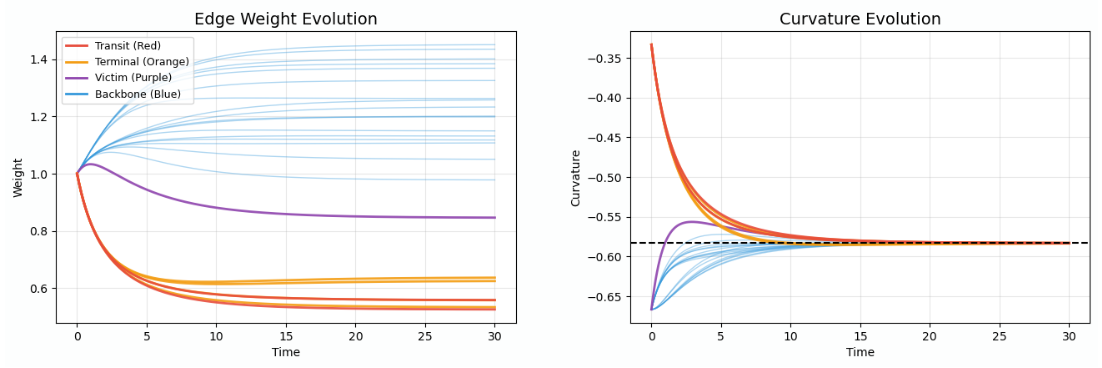}
    };

    \node[font=\small, align=center] at ($(sim_image.south) + (0, -0.3)$) {
        Figure 2: The evolution on the asymmetric graph based on GP(8,3). 
    };
\end{tikzpicture}

\begin{itemize}
    \item the weights of red and orange edges undergo rapid decay ($\omega < 0.7$), reflecting the flow's tendency to compress anomalous structures; 
    \item  the weights of the majority of blue edges expand ($\omega > 1.2$), effectively reinforcing the network's backbone;
    \item the weight of
the purple edge ((0-7)  serves as a representative edge) exhibits non-monotonic dynamics, rising initially before decreasing to a value below that of the blue edges, thereby revealing the effects induced by the anomalous structures.

\end{itemize}
By the stratification of edge weights,  topological defects are explicitly captured.

\subsection{Recovering the geometric embedding of a surface tessellation}
We perform a polygonal tiling  on the surface, requiring the girth of its 1-skeleton to be greater than 5. The edge weights are defined as the geodesic distances between endpoints, and the Ricci flow \eqref{case_av} is evolved on this 1-skeleton.
According to Proposition \ref{regular}, the constant curvature weights for regular or  semi-regular bipartite graphs are themselves constant. Consequently,  the evolution of \eqref{case_av} continuously diminishes the disparities between the edge weights, eventually reaching constant edge lengths without the need to monitor the polygon closure condition during the evolutionary process.

In Figure 3, we perform a tiling on a torus using $GP(8,3)$, with initial weights (namely, the initial edge lengths) assigned by random variables. By employing this flow \eqref{case_av}, a set of weights effectively ``inflates'' the initially ``flabby'' topological structure of the torus into a geometric form characterized by perfect symmetry. This allows for the measurement of the torus's moduli parameters, thereby determining its conformal structure. 


%
\begin{figure}[H] 
    \centering
    \includegraphics[width=1\textwidth]{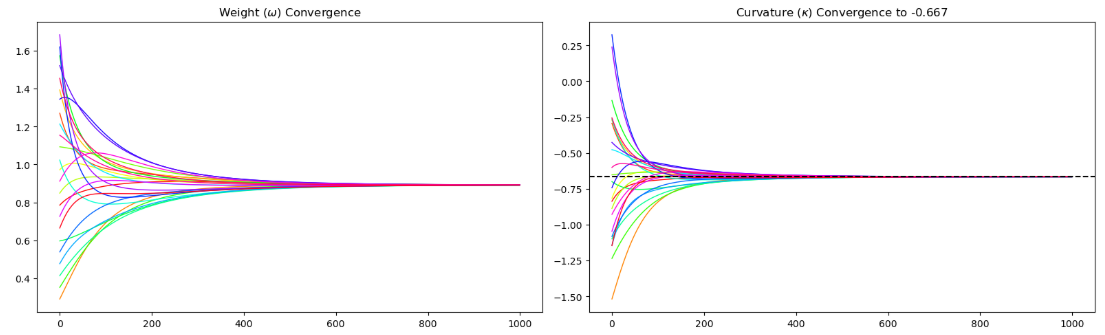}
    \caption*{Figure 3: The evolution on GP(8,3) with initial weights assigned by random variables}
    \label{fig:ricci_evolution}
    
    \small 
    \begin{flushleft}
    \end{flushleft}
\end{figure}


{\bf Acknowledgments:} 
Y. Lin is supported by NSFC, no.12471088. S. Liu is supported by NSFC, no.12001536, 12371102.

\bibliographystyle{plain}
\bibliography{citations}

\end{document}